\theoremstyle{remark}
\newtheorem{para}{\bf}[section]
\newtheorem{example}[para]{\bf Example}
\newtheorem{rem}[para]{\bf Remark}
\theoremstyle{definition}
\newtheorem{dfn}[para]{Definition}
\theoremstyle{plain}
\newtheorem{theorem}[para]{Theorem}
\newtheorem{lemma}[para]{Lemma}
\newtheorem{cor}[para]{Corollary}
\newtheorem{prop}[para]{Proposition}
\newenvironment{numequation}{\addtocounter{para}{1}
\begin{equation}}{\end{equation}}
\newcommand{\bbB}{{\mathbb B}}
\newcommand{\bbC}{{\mathbb C}}
\newcommand{\bbG}{{\mathbb G}}
\newcommand{\bbH}{{\mathbb H}}
\newcommand{\bbN}{{\mathbb N}}
\newcommand{\bbQ}{{\mathbb Q}}
\newcommand{\bbR}{{\mathbb R}}
\newcommand{\bbZ}{{\mathbb Z}}
\newcommand{\frg}{{\mathfrak g}}
\newcommand{\frh}{{\mathfrak h}}
\newcommand{\frH}{{\mathfrak H}}
\newcommand{\cA}{{\mathcal A}}
\newcommand{\cC}{{\mathcal C}}
\newcommand{\cD}{{\mathcal D}}
\newcommand{\cF}{{\mathcal F}}
\newcommand{\cG}{{\mathcal G}}
\newcommand{\cO}{{\mathcal O}}
\newcommand{\DHnn}{D(\bbH_n^\circ, H)}
\newcommand{\DlHN}{{D^{\rm la}(H,E)}}
\newcommand{\Danh}{D^{an}(\bbH, E)}
\newcommand{\Danhn}{D^{an}(\bbH_n^{\circ}, E)}
\newcommand{\ClaZp}{C^\la(\Zp, E)}
\newcommand{\an}{{\rm an}}
\newcommand{\car}{\stackrel{\simeq}{\longrightarrow}}
\newcommand{\cm}{{\mathfrak m}_{\bbC_p}}
\newcommand{\cont}{{\rm cont}}
\newcommand{\der}{\partial}
\newcommand{\eps}{{\epsilon}}
\newcommand{\Hom}{{\rm Hom}}
\newcommand{\hot}{{\widehat{\otimes}}}
\newcommand{\hra}{\hookrightarrow}
\newcommand{\id}{{\rm id}}
\newcommand{\im}{{\rm im}}
\newcommand{\la}{{\rm la}}
\newcommand{\Lie}{{\rm Lie}}
\newcommand{\lra}{\longrightarrow}
\newcommand{\midc}{{\; | \;}}
\newcommand{\Mod}{{\rm Mod}}
\newcommand{\Ne}{{\mathbb N}}
\newcommand{\ot}{\otimes}
\newcommand{\pr}{{\rm pr}}
\newcommand{\Q}{{\mathbb Q}}
\newcommand{\Qp}{{\bbQ_p}}
\newcommand{\ra}{\rightarrow}
\newcommand{\Sp}{{\rm Sp}}
\newcommand{\Spf}{{\rm Spf}}
\newcommand{\sub}{\subset}
\newcommand{\Sym}{{\rm Sym}}
\newcommand{\vpi}{{\varpi}}
\newcommand{\Z}{{\mathbb Z}}
\newcommand{\Zp}{{\mathbb Z_p}}
\begin{document}

\title{Rigid Analytic Vectors in Locally Analytic Representations}



\author{}
\address{}
\curraddr{}
\email{lahiria@iu.edu}
\thanks{}

\author{Aranya Lahiri}
\address{Department of Mathematics, Indiana University, Bloomington}
\curraddr{}
\email{}
\thanks{}

\subjclass[2010]{11Sxx }

\date{}

\dedicatory{} 

\begin{abstract}
Let $H$ be a uniform pro-$p$ group. Associated to $H$ are rigid analytic affinoid groups $\bbH_n$, and their ``wide open" subgroups $\bbH_n^{\circ}$. Denote by $D^\la(H)= C^\la(H)'_b$ the locally analytic distribution algebra of $H$ and by $\DHnn$ Emerton's ring of $\bbH_n^{\circ}$-rigid analytic distributions on $H$. If $V$ is an admissible locally analytic representation of $H$, and if $V_{\bbH_n^\circ-\an}$ denotes the subspace of $\bbH_n^\circ$-rigid analytic vectors (with its intrinsic topology), then we show that the continuous dual of  $V_{\bbH_n^\circ-\an}$ is canonically isomorphic to $\DHnn \ot_{D^\la(H)} V'$. From this we deduce the exactness of the functor $V \rightsquigarrow V_{\bbH_n^\circ-\an}$ on the category of admissible locally analytic representations of $H$.
\end{abstract}

\numberwithin{equation}{section}

\maketitle
\tableofcontents

\section{Introduction}
Let $\bbQ_p\subseteq F\subseteq E$ be finite extensions 
of $\bbQ_p$. Locally analytic representations of 
locally $F$-analytic groups as introduced by 
Schneider-Teitelbaum \cite{S-T} have occupied a central 
stage in the $p$-adic local Langlands program in recent 
years. For a compact $F$-analytic group $G$ the locally 
analytic distribution algebra $D^\la(G,E)$ associated 
to $G$ carries the structure of a Fr\'echet-Stein 
algebra, and the category of coadmissible modules 
over $D^\la(G,E)$ is, by definition, anti-equivalent to the 
category of admissible locally $F$-analytic  
representations of $G$ over $E$, cf. \cite{S-T2}. 

\vskip8pt

To describe the main results of this paper, we assume in the introduction that the base field $F$ is $\bbQ_p$. 
Throughout this paper we denote by $H$ a uniform 
pro-$p$ group. There is canonically associated to $H$ a  rigid analytic affinoid group $\bbH$ and a sequence of affinoid subgroups $\bbH_n \sub \bbH$, for $n \in \Ne$. 
Each $\bbH_n$ in turn contains a canonically defined ``wide open" subgroup $\bbH_n^\circ$ (which is in general not affinoid). Let $\DlHN:= C^\la(H,E)'_b$ be the locally analytic distribution algebra, which is the strong dual of the space of $E$-valued locally analytic functions on $H$. The multiplication on $\DlHN$ is a convolution product. Furthermore, denote by $C^\la(H,E)_{\bbH_n^{\circ}-\an} \sub C^\la(H,E)$ the subspace of functions which are rigid analytic for the translation action of $\bbH_n^\circ$, and let $\DHnn:= (C^\la(H,E)_{\bbH_n^{\circ}-\an})'_b$ be its strong dual space, which is again equipped with the structure of an $E$-algebra. The inclusion $C^\la(H,E)_{\bbH_n^\circ-\an} \hra C^\la(H,E)$ induces by duality a morphism of $E$-algebras 

\begin{numequation}\label{can_morph} \DlHN \lra \DHnn \;.
\end{numequation}

If $V$ is an admissible locally analytic representation of $H$ over $E$, then, by definition, the continuous dual space $V'_b$ (equipped with the topology of bounded convergence) is a coadmissible $\DlHN$-module. The space of $\bbH_n^\circ$-rigid analytic vectors $V_{\bbH_n^\circ-\an}$ of $V$ (which carries an intrinsic topology finer than the subspace topology) plays a crucial role in M. Emerton's treatment of locally analytic representation theory, cf. \cite{Em17}. In that \cite[Thm. 1.2.11, Thm. 6.1.19]{Em17} Emerton has shown that the canonical homomorphism of topological $\DHnn$-modules

\begin{numequation}\label{can}
\DHnn \ot_\DlHN V'_b \lra (V_{\bbH_n^\circ-\an})'_b
\end{numequation}

gives rise to an isomorphism $\DHnn \hot_\DlHN V' \lra V_{\bbH_n^\circ-\an}$. The key result of this note is that one does not need to use the completed tensor product here. In other words:

\begin{theorem}\label{Main1} (cf. \ref{coad}) The map \ref{can}
is bijective.
\end{theorem}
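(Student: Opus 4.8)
The idea is to identify the underlying $\DHnn$-modules of both sides of \ref{can} with one and the same locally convex inductive limit, using the Fr\'echet-Stein structure to see that no completion is needed.

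Recall that $\DlHN$ is a Fr\'echet-Stein algebra whose defining Banach localizations form a family of Noetherian Banach algebras $D_r(H)$, and that $C^\la(H,E)_{\bbH_n^{\circ}-\an}$ is the projective limit $\varprojlim_r C^{\an}(\bbH_r,E)$ taken over the radii $r$ lying below the radius of $\bbH_n^\circ$; dualizing, $\DHnn = \varinjlim_r D_r(H)$ as a locally convex $E$-algebra, the transition maps $D_r(H)\hra D_{r'}(H)$ being injective and, by the Montel-type compactness of the restriction of rigid analytic functions to a strictly smaller concentric polydisc (passed to adjoints), compact. In particular $\DHnn = \varinjlim_r D_r(H)$ as a $\DlHN$-module, so, the algebraic tensor product commuting with inductive limits, $\DHnn \ot_\DlHN V'_b = \varinjlim_r (D_r(H)\ot_\DlHN V'_b)$ as $\DHnn$-modules.

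Since $V$ is admissible, $V'_b$ is a coadmissible $\DlHN$-module \cite{S-T2}; hence each $D_r(H)\ot_\DlHN V'_b$ is finitely generated over the Noetherian Banach algebra $D_r(H)$, is complete for its canonical Banach topology, coincides with $D_r(H)\hot_\DlHN V'$, and --- by the affinoid analogue of the theorem, with $\bbH_r$ in place of $\bbH_n^\circ$, a standard consequence of coadmissibility (cf.\ \cite{S-T2}, \cite{Em17}) --- is canonically $(V_{\bbH_r-\an})'_b$. Thus the source of \ref{can} has underlying module $\varinjlim_r (V_{\bbH_r-\an})'_b$. On the other hand, by \cite{Em17} the space $V_{\bbH_n^\circ-\an}$ with its intrinsic topology is the projective limit $\varprojlim_r V_{\bbH_r-\an}$, a nuclear Fr\'echet space (the transition maps being compact, as adjoints of the $D_r(H)\hra D_{r'}(H)$), so its strong dual is $(V_{\bbH_n^\circ-\an})'_b \cong \varinjlim_r (V_{\bbH_r-\an})'_b$ as locally convex spaces, in particular with the same underlying module. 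Finally, by construction the canonical map \ref{can} restricts on the $D_r(H)$-level of the source to the canonical isomorphism $D_r(H)\ot_\DlHN V'_b \car (V_{\bbH_r-\an})'_b$ of the affinoid case; hence, under the two identifications, it is the identity map of $\varinjlim_r (V_{\bbH_r-\an})'_b$, and therefore bijective. (Keeping track of the topologies throughout, the same argument shows that \ref{can} is a topological isomorphism of $\DHnn$-modules, which is the stronger form \ref{coad}.)

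\emph{Where the difficulty lies.} The genuinely new point, beyond Emerton's theorem \cite[Thm.~6.1.19]{Em17}, is that each intermediate tensor product $D_r(H)\ot_\DlHN V'_b$ is \emph{already complete}: this is exactly where coadmissibility of $V'_b$ and the Noetherianity of $D_r(H)$ are indispensable, and it is what makes the completed tensor product unnecessary at the level of $\DHnn$. The supporting inputs are comparatively routine --- compactness of the transition maps $D_r(H)\hra D_{r'}(H)$ (so that the inductive limits occurring are of compact type, hence complete and well-behaved under duality), and Emerton's identification of the intrinsic topology on $V_{\bbH_n^\circ-\an}$ with the projective-limit topology (so that the strong dual is the matching inductive limit); the compatibility of \ref{can} with the passage to inductive limits reduces, by construction, to the affinoid case.
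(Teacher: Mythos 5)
Your overall strategy --- exhaust $\bbH_n^\circ$ by affinoids $\bbH_r$, write both $\DHnn\ot_\DlHN V'_b$ and $(V_{\bbH_n^\circ-\an})'_b$ as inductive limits over $r$ of the corresponding affinoid objects, and match them level by level --- is a legitimate alternative in outline to what the paper does. The paper instead proves an abstract comparison theorem (Thm.~\ref{ThWFS}) between the weak Fr\'echet--Stein structure $\varprojlim_n \DHnn$ and the genuine Fr\'echet--Stein structure $\varprojlim_n \DHnn^{(m_n)}$: surjectivity comes from ``a finitely generated submodule of a compact-type module is closed'' (Lemma~\ref{cptype}) combined with density from Emerton's completed-tensor-product theorem (Thm.~\ref{lemco}), and injectivity from a separate syzygy argument adapted from \cite[Cor.~3.1]{S-T2}. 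However, your proof has a genuine gap at its central step: the claim that $D_r(H)\ot_\DlHN V'_b \car (V_{\bbH_r-\an})'_b$ is ``the affinoid analogue of the theorem \dots a standard consequence of coadmissibility.'' That is exactly where the content of the theorem lives, and it is not quotable as it stands. To extract it from \cite[Cor.~3.1]{S-T2} you need (a) that the rings $(C^\la(H,E)_{\bbH_r-\an})'_b$ are Noetherian Banach algebras with flat transition maps, i.e.\ that they constitute a Fr\'echet--Stein structure on $\DlHN$, and (b) that $\{(V_{\bbH_r-\an})'_b\}_r$ is the associated coherent sequence of $V'_b$. Neither is immediate: the Noetherian Banach algebras actually available in \cite{Em17} and in this paper are the auxiliary rings $\DHnn^{(m)}$ (completions of pieces of the enveloping algebra), not the duals of affinoid-analytic vectors, and identifying the coherent sequence with $\{(V_{\bbH_r-\an})'_b\}_r$ already requires combining Emerton's $\hot$-isomorphism with a finite-generation-implies-completeness argument --- the same two ingredients the paper assembles in the surjectivity half of Thm.~\ref{ThWFS}. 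Filling in your step honestly amounts to reproving most of that theorem.

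Two smaller points. The duality $(V_{\bbH_n^\circ-\an})'_b\cong\varinjlim_r (V_{\bbH_r-\an})'_b$ (strong dual of a projective limit equals inductive limit of strong duals) needs the hypotheses of a statement like \cite[16.10]{NFA} to be verified for the system $\{V_{\bbH_r-\an}\}_r$ when $V$ is merely of compact type; you argue compactness only for the transition maps of the $D_r(H)$. And note that in your setup injectivity of \ref{can} comes for free because an inductive limit of bijections is a bijection, whereas in the paper injectivity requires its own nontrivial argument precisely because no level-by-level reduction to the affinoid case is performed; this asymmetry is a sign that the burden of proof has been shifted into the unproved affinoid statement rather than discharged.
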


Then we note that results of Emerton and Schneider-Teitelbaum imply that

\begin{theorem}\label{Main2} (cf. \ref{flatness}) The homomorphism \ref{can_morph} is flat. 
\end{theorem}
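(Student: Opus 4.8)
The plan is to exhibit $\DHnn$, viewed as a module over $\DlHN$ via \ref{can_morph}, as a filtered colimit of Noetherian Banach algebras each of which is flat over $\DlHN$, and then to let flatness pass to the colimit. The substance is entirely in the structure theory of Schneider--Teitelbaum together with Emerton's description of $\DHnn$; once those are invoked, the argument is soft.

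First I would fix notation for the two limits. Let $\bbH_{n,1}\subseteq\bbH_{n,2}\subseteq\cdots$ be the canonical increasing exhaustion of the wide open subgroup $\bbH_n^{\circ}$ by affinoid subgroups, chosen with radii in $p^{\Q}$ and each relatively compact in the next; then $C^\la(H,E)_{\bbH_{n,m}-\an}$ is a Banach space whose strong dual $D(\bbH_{n,m},H)$ is a Noetherian Banach $E$-algebra --- one of those occurring in the Fr\'echet--Stein presentation of $\DlHN$ established in \cite{S-T2} --- and the transition maps among the $D(\bbH_{n,m},H)$, as $n$ and $m$ vary, are flat with dense image. Thus $\DlHN=\varprojlim_{n,m}D(\bbH_{n,m},H)$, while on the function side $C^\la(H,E)_{\bbH_n^{\circ}-\an}=\varprojlim_m C^\la(H,E)_{\bbH_{n,m}-\an}$ is a nuclear Fr\'echet space, so that by duality
\[
\DHnn \;=\; \varinjlim_m D(\bbH_{n,m},H),
\]
the colimit running over the injections dual to the (dense-image) restriction maps. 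Since the composite $\DlHN\to D(\bbH_{n,m},H)\hookrightarrow\DHnn$ is precisely \ref{can_morph}, it follows that $\DHnn$, as a left $\DlHN$-module, is the filtered colimit of the $\DlHN$-modules $D(\bbH_{n,m},H)$.

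The key step is that each $D(\bbH_{n,m},H)$ is flat --- in fact faithfully flat --- over $\DlHN$. This is a general property of two-sided Fr\'echet--Stein algebras that comes out of \cite{S-T2}: every finitely generated $\DlHN$-module is coadmissible, and for a coadmissible module $M$ one has $\mathrm{Tor}^{\DlHN}_i\big(D(\bbH_{n,m},H),\,M\big)=0$ for all $i\geq 1$; applying this with $M=\DlHN/I$ for finitely generated left ideals $I$ gives flatness.

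It then remains only to pass to the colimit, which is formal: since $\mathrm{Tor}$ commutes with filtered colimits, for every $\DlHN$-module $N$ one gets
\[
\mathrm{Tor}^{\DlHN}_1\big(\DHnn,\,N\big)\;=\;\varinjlim_m\mathrm{Tor}^{\DlHN}_1\big(D(\bbH_{n,m},H),\,N\big)\;=\;0,
\]
so $\DHnn$ is flat over $\DlHN$, i.e. \ref{can_morph} is flat (and, on either side, since everything in sight is two-sided). The argument does not use Theorem~\ref{Main1}. The only point needing real care --- bookkeeping rather than a genuine obstacle --- is matching conventions: one has to check that the affinoid subgroups $\bbH_{n,m}$ exhausting $\bbH_n^{\circ}$ belong to the class for which \cite{S-T2} proves Noetherianity of the distribution algebra and flatness of the transition maps (which is exactly why one works with radii in $p^{\Q}$, not merely the integral congruence levels), and that Emerton's notion of $\bbH_n^{\circ}$-analytic vector dualizes precisely to the displayed inductive limit of Banach algebras. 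With those two inputs secured, everything else is routine.
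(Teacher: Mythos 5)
Your argument is correct and is essentially the paper's own: both reduce the flatness of $\DlHN \ra \DHnn$ to Schneider--Teitelbaum's result that the projection from a Fr\'echet--Stein algebra onto any Banach algebra of a Fr\'echet--Stein presentation is flat (\cite[Rem. 3.2]{S-T2}), and both then pass through a filtered colimit. The only real difference is the choice of exhausting family: the paper factors the map as $\DlHN \ra \DHnn^{(m_n)} \hra \DHnn$, using Emerton's divided-power algebras $\DHnn^{(m)}$ and quoting his flatness of the second arrow (\ref{structure2} (ii)--(iii), \ref{flatness}), whereas you present $\DHnn$ as $\varinjlim_m D(\bbH_{n,m},H)$ over an affinoid exhaustion of $\bbH_n^\circ$ and make each piece flat over $\DlHN$ before taking the colimit; the two inductive systems are cofinal in one another, so this is a cosmetic variation. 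Three small caveats. First, the maps $\DlHN \ra D(\bbH_{n,m},H)$ are flat but not faithfully flat --- a coadmissible module can vanish at a given level without vanishing --- though you never actually use faithfulness. Second, the ``bookkeeping'' of identifying the $D(\bbH_{n,m},H)$ with Noetherian Banach algebras occurring in a Fr\'echet--Stein presentation of $\DlHN$, with flat transition maps, is precisely the content of \cite[5.3.19]{Em17} (the same nontrivial input the paper invokes via \ref{structure2}), so it should not be waved off as routine; it is where the substance lives. Third, ``every finitely generated $\DlHN$-module is coadmissible'' is an overstatement, but the flatness criterion only needs finitely generated left ideals, and those are coadmissible as finitely generated submodules of $\DlHN$ by \cite[Cor. 3.4]{S-T2}, so your argument survives.
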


Combining these two results gives

\begin{theorem}\label{Main3} (cf. \ref{Exact}) The functor $V \rightsquigarrow V_{\bbH_n^\circ-\an}$ from the category of locally analytic admissible $H$-representations to the category of $E$-vector spaces is exact. 
\end{theorem}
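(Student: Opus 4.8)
The plan is to obtain Theorem~\ref{Main3} as a purely formal consequence of Theorems~\ref{Main1} and~\ref{Main2}, together with the Schneider--Teitelbaum duality and some standard nonarchimedean functional analysis; all of the genuine content already sits in the two preceding theorems.

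Let $0 \to V_1 \to V_2 \to V_3 \to 0$ be a short exact sequence of admissible locally analytic $H$-representations over $E$. First I would pass to strong duals. By \cite{S-T2} the functor $V \rightsquigarrow V'_b$ is an exact anti-equivalence from the category of admissible locally analytic $H$-representations over $E$ onto the category of coadmissible $\DlHN$-modules, so we obtain a short exact sequence
\[
0 \lra V_3' \lra V_2' \lra V_1' \lra 0
\]
of coadmissible $\DlHN$-modules. Next I would base change along~\ref{can_morph}: by Theorem~\ref{Main2} the homomorphism~\ref{can_morph} is flat, hence $\DHnn \ot_\DlHN (-)$ is exact, and we get a short exact sequence of $\DHnn$-modules
\[
0 \lra \DHnn \ot_\DlHN V_3' \lra \DHnn \ot_\DlHN V_2' \lra \DHnn \ot_\DlHN V_1' \lra 0 \;.
\]
The point of using the \emph{algebraic} tensor product here is exactly that Theorem~\ref{Main1} lets us interpret it: for each $i$ the map~\ref{can} is a bijection $\DHnn \ot_\DlHN V_i' \to (V_{i,\bbH_n^\circ-\an})'_b$, and since~\ref{can} is induced by the functorial transition maps $V_{1,\bbH_n^\circ-\an} \to V_{2,\bbH_n^\circ-\an} \to V_{3,\bbH_n^\circ-\an}$ this identification is natural. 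Thus the last displayed sequence is a short exact sequence of the strong duals $(V_{i,\bbH_n^\circ-\an})'_b$ whose arrows are the transposes of these transition maps.

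To conclude I would dualize once more. The spaces $V_{i,\bbH_n^\circ-\an}$ are reflexive (by \cite{Em17} they are nuclear Fr\'echet spaces, whose strong duals are of compact type), so it is enough to show that the short exact sequence of the $(V_{i,\bbH_n^\circ-\an})'_b$ is topologically exact and then to invoke the exactness of the strong-dual anti-equivalence between nuclear Fr\'echet spaces and spaces of compact type (cf. \cite{S-T}, or Schneider's book on nonarchimedean functional analysis). Topological exactness follows from the open mapping theorem in this setting: the surjection $(V_{2,\bbH_n^\circ-\an})'_b \to (V_{1,\bbH_n^\circ-\an})'_b$ is then automatically open, and the injection $(V_{3,\bbH_n^\circ-\an})'_b \hra (V_{2,\bbH_n^\circ-\an})'_b$ has closed image --- being the kernel of the previous arrow --- hence is a topological embedding onto a closed, and therefore again compact-type, subspace. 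Dualizing produces the exact sequence
\[
0 \lra V_{1,\bbH_n^\circ-\an} \lra V_{2,\bbH_n^\circ-\an} \lra V_{3,\bbH_n^\circ-\an} \lra 0 \;,
\]
which in particular shows that $V \rightsquigarrow V_{\bbH_n^\circ-\an}$ is exact as a functor with values in $E$-vector spaces.

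The main obstacle I expect is not conceptual but a matter of care: one must verify that the isomorphism of Theorem~\ref{Main1} is natural in $V$ and compatible with the $\DHnn$-module structures, so that the flat base change of the dualized sequence genuinely computes the duals of the spaces of $\bbH_n^\circ$-analytic vectors; and one must cite the correct forms of the open mapping theorem and of the exactness of strong duality for the $p$-adic locally convex spaces that occur here. Granting this, Theorem~\ref{Main3} is purely formal once Theorems~\ref{Main1} and~\ref{Main2} are in hand.
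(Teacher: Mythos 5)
Your proposal is correct and follows essentially the same route as the paper: the paper likewise factors the functor as $V \rightsquigarrow V'_b$, followed by $M \rightsquigarrow \DHnn \ot_{\DlHN} M$ (exact by the flatness of Theorem~\ref{Main2} and identified with $(V_{\bbH_n^\circ-\an})'_b$ via Theorem~\ref{Main1}), followed by a final dualization, with exactness of the two duality steps established exactly as you indicate, via strictness of morphisms, the open mapping theorem, and the Hahn--Banach/compact-type duality arguments. The only difference is presentational (the paper phrases it as a composition of three exact functors rather than chasing one short exact sequence), so no further comment is needed.
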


Most probably the results in this note are well known to the experts. But the author has not seen \ref{Main1} or \ref{Main3} stated like this in literature.\footnote{After a first version of this paper was written and posted on arXiv, it was pointed out to us that the results of this paper are contained in the appendix of the preprint \cite{Em_JacquetII}. While the arguments given here and in the appendix of \cite{Em_JacquetII} are similar, we make heavier use of the results of \cite{S-T2} and place more emphasis on the interaction of Fr\'echet-Stein and weak Fr\'echet-Stein structures. Moreover, our treatment of good analytic subgroups is based on the concept of uniform pro-$p$ groups and thus offers an alternative and self-contained point of view.} It will be clear to the reader how much this paper owes to the profound work done in \cite{S-T2} and \cite{Em17}.

\vskip8pt

\textit{Notation and conventions.}
As above, let $\Qp \subseteq F \subseteq E$ be finite field extensions. We denote by $\cO_F$ the ring of integers of $F$. Further, we denote by $|.|$ the unique extension to $E$ (resp $F$) of the $p$-adic absolute value $|.|_p$ on $\bbQ_p$ normalized by $|p|_p = p^{-1}$.

\section{Preliminaries}
\begin{para}\label{weakFS} {\it Weak Fr\'echet-Stein algebras.} For general background on locally convex vector spaces over non-archimidean fields we refer the reader to \cite{NFA} and \cite{Em17}.
For a locally convex $E$-algebra $A$ the notion of a  weak Fr\'echet-Stein structure on $A$ has been defined in \cite[Def 1.2.6]{Em17}. We recall the definition here.

\vskip8pt

Let $A$ be a locally convex $E$-algebra, a \textit{weak Fr\'echet-Stein structure} on $A$ consists of the following data:

\begin{enumerate}\label{WFS}
\item A sequence $\{A_n\}_n$ of hereditarily complete, locally convex topological $E$-algebras $A_n$, for each $n\geq 1$.

\item A BH map\footnote{A continuous linear map between Hausdorff topological $E$-vector spaces $U\ra V$  is called BH if it admits a factorization of the form $U\ra T \ra V$, where $T$ is a Banach $E$-space \cite[Def. 1.1.13]{Em17}} $\tau^A_{n+1,n}: A_{n+1}\rightarrow A_{n}$, that is a continuous homomorphism of locally convex topological $E$ vectors spaces, for each $n\geq 1$.

\item An isomorphism of topological $A$-modules $A\cong \varprojlim_n A_n$, where each of the maps $A\rightarrow A_n$ has dense image. The right hand side is given a projective limit topology from the transition maps induced from part (2).
\end{enumerate}

\vskip8pt

Such a locally convex topological algebra $A$ with a choice of weak Fr\'echet-Stein structure is called a \textit{weak Fr\'echet-Stein algebra}. 
\end{para}

\begin{example} {\it The locally analytic distribution algebra of $\Zp$.} Let $D^\la(\Zp, E)$ be the strong dual of the space of locally analytic functions $\Zp \ra E$. We observe that

$$\ClaZp_{p^n\cm-\an}= \bigoplus_{a\in \Z/p^{n+1}\Z}\cO(a+p^n\cm) \;,$$

\vskip8pt

where 

$$\cO(a+p^n\cm):=\left\{\sum_n c_n(x-a)^n|\; c_n\in E, \lim_{n\ra \infty}c_nr^n=0\;\forall\;r< |p|^n \right\} \;.$$

\vskip8pt

By compactness of $\Zp$ it follows that 

$$\ClaZp = \varinjlim_{n\geq 0}\Big(\bigoplus_{a\in \Z/p^{n+1}\Z}\cO(a+p^n\cm)\Big) \;,$$ 

\vskip8pt

and hence by \cite[Prop. 16.10 (iii)]{NFA}

$$D^\la(\Zp,E) =\varprojlim_{n\geq 0}\Big(\bigoplus_{a\in \Z/p^{n+1}\Z}\cO(a+p^n\cm)\Big)'_b \;.$$ 

\vskip8pt

Where the spaces are topologized by the locally convex inductive and projective limit topology respectively with the natural restrictions (and their duals) as transition maps. Let us set 

$$A_n:= \Big(\bigoplus_{a\in \Z/p^{n+1}\Z}\cO(a+p^n\cm)\Big)'_b $$ 

We show in Cor.~\ref{pnan} that $A_n\cong \cO_{r_n}(X)^{\dagger}$, where 

$$\cO_{r_n}(X)^{\dagger}:=\Big\{\sum_{n=0}^\infty a_n T^n \; \Big| \; \lim_{n \ra \infty} a_n R^n  \\= 0 \text{\;for \;some\;} R  > r_n\Big\}$$ 

\vskip8pt

is the set of overconvergent functions on a closed disk of radius $r_n:=|p|^{\frac{1}{p^n(p-1)}}$.

\vskip8pt

Let us denote by $\partial$  the generator of the Lie algebra of $\Zp$, given by $\partial(f)= f'(0)$. Put $q^{(m)}_i=\left\lfloor \frac{i}{p^m}\right\rfloor$, and set 

$$D^{(m)}_n:=\left\{\sum_{i=0}^\infty b_i \frac{q_i^{(m)}!}{i!}(p^n\partial)^i \;\Big|\; b_i\in E,\, \lim_{i \ra \infty} b_i\ra 0\right\} \;.$$ 

\vskip8pt

Then, the map $\partial\ra \log(1+T)$, induces an isomorphism $\varinjlim_m D^{(m)}_n \car A_n$. Each of the $D^{(m)}_n$ are Noetherian Banach algebras and the locally convex inductive limit topology coincides under this identification with the natural topology of the space of overconvergent functions. Each of the natural restriction maps $A_{n+1}\ra A_n$ factors through $A_{n+1}\ra D_{n}^{(0)}\ra A_n$. It can then be shown that the isomorphism $D^\la(\Zp,E) \cong \varprojlim_{n}A_n$ defines a weak Fr\'echet-Stein structure on $D^\la(\Zp,E)$, and the isomorphism $D^\la(\Zp,E)\cong \varprojlim_{n}D_n^{(0)}$ defines a Fr\'echet-Stein structure (whose definition is recalled below) on $D^\la(\Zp,E)$. \hfill $\blacksquare$

\end{example}

\vskip8pt

Next we recall the notion of coadmissible modules over weak Fr\'echet-Stein algebras and the stronger notion of Fr\'echet-Stein algebras.

\vskip8pt

\begin{para}\label{coadmissible} {\it Coadmissible modules.}  Let $A$ be a weak Fr\'echet-Stein algebra with a choice of weak Fr\'echet-Stein structure $A\cong\varprojlim_{n}A_n$. A locally convex topological $A$-module $M$ is called \textit{coadmissible} \cite[Def. 1.2.8]{Em17} (with respect to the chosen weak Fr\'echet-Stein structure of $A$) if there exist the following data:
\begin{enumerate}
\item A sequence $\{M_n\}_{n \ge 1}$ of finitely generated topological $A_n$-modules, for each $n\geq 1$.

\item An isomorphism $A_n \hot_{A_{n+1}}M_{n+1}\cong M_n $, for each $n\geq 1$.

\item An isomorphism of topological $A$-modules $M\cong \varprojlim_{n}M_n$, where the right hand side is given a projective limit structure and topology from the transition maps induced from part (2).
\end{enumerate}
\end{para}

\vskip8pt

\begin{rem}
The completed tensor product in \ref{coadmissible} (2) is defined as follows, cf. \cite[after 1.2.3]{Em17}. If $V$ and $W$ are two locally convex $E$-vector spaces then the locally convex projective tensor product topology on $V \ot_E W$ is the universal topology for jointly continuous bilinear maps $V \times W \ra  U$ of locally convex $E$-vector spaces. We let $V \ot_{E,\pi} W$ denote $V \ot_E W$ equipped with this projective tensor product topology. Let $A \ra B$ be a continuous homomorphism of locally convex topological $E$-algebras and let $M$ be a locally convex topological $A$-module. If $B \ot_A M$ is endowed by the quotient topology obtained by regarding it as a quotient of $B \ot_{E,\pi} M$, then $B \ot_A M$ is a locally convex topological $B$-module, cf. \cite[1.2.3]{Em17}. We let $B \hot_A M$ be the completion of the locally convex $E$-vector space $B \ot_A M$, cf. \cite[7.5]{NFA}. By \cite[1.2.2]{Em17} the completion $B \hot_A M$ carries a canonical structure of a locally convex topological $B$-module. \hfill $\blacksquare$
\end{rem}

\begin{para}\label{independence} Given a weak Fr\'echet-Stein algebra $A$, with a weak Fr\'echet-Stein structure $A \car \varprojlim_n A_n$, and a coadmissible $A$-module $M$, a sequence $\{M_n\}_n$ as in \ref{coadmissible} (1) is referred to as an $\{A_n\}_n$-sequence for $M$. We let $\pr^{M_\bullet}_n: M \ra M_n$ be the map with is the composition of the chosen isomorphism $M \car \varprojlim_n M_n$ followed by the canonical projection $\varprojlim_n M_n \ra M_n$. We also recall that if $M$ is a coadmissible module with respect to a certain weak Fr\'echet-Stein structure then it is coadmissible with respect to any weak Fr\'echet-Stein structure on the same algebra \cite[Prop. 1.2.9]{Em17}.
\end{para}
 
\vskip8pt

\begin{para}\label{FS-algs} {\it Fr\'echet-Stein algebras.} A locally convex $E$-algebra $A$ is said to have a \textit{Fr\'echet-Stein structure} \cite[Def. 1.2.10]{Em17} if it is endowed with a weak Fr\'echet-Stein structure $A \car \varprojlim_n B_n$ with the additional requirements that each $B_n$ is left Noetherian and each map $B_{n+1} \ra B_n$ is right flat. Such an algebra equipped with a Fr\'echet-Stein structure is called a \textit{Fr\'echet-Stein algebra}.
We begin with recalling the following useful theorems.
\end{para}

\begin{theorem}\cite[Cor. 3.1]{S-T2}\label{tensorbijection}
Let $A$ be a Fr\'echet-Stein algebra, equipped with a Fr\'echet-Stein structure $A \car \varprojlim_n B_n$. Let $M \car \varprojlim_n M_n$ be a coadmissible $A$-module, where $\{M_n\}_n$ is a $\{B_n\}_n$-sequence. Then the map

$$\id_{B_n} \ot \pr^{M_\bullet}_n: B_n \ot_A M \lra B_n \ot_{B_n} M_n = M_n $$ 

\vskip8pt

is an isomorphism for all $n \ge 1$.
\end{theorem}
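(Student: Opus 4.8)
The statement is imported from \cite{S-T2}, so I will sketch how I would reconstruct the argument. Write $\Phi_n\colon B_n\ot_A M\to M_n$ for the map in question, given concretely by $b\ot m\mapsto b\cdot\pr^{M_\bullet}_n(m)$ (this is $\id_{B_n}\ot\pr^{M_\bullet}_n$ followed by the identification $B_n\ot_A M_n=B_n\ot_{B_n}M_n=M_n$, legitimate because the $A$-action on $M_n$ factors through $B_n$). The plan is to prove $\Phi_n$ is bijective, treating surjectivity by a soft argument and injectivity by a diagram chase around a finite free presentation of $M$; the topological assertion then follows formally. Before starting I would record the two inputs I will use, from \cite{S-T2} and the theory of Noetherian Banach algebras: (i) a finitely generated module over a Noetherian Banach $E$-algebra $B$ carries a canonical topology making it complete, in which every $B$-submodule is closed, and for such modules the completed tensor product over $B$ agrees with the algebraic one (so the $\hot$'s in \ref{coadmissible}(2) are ordinary tensor products); (ii) for any coherent sheaf $(N_m)_m$ on the Fr\'echet--Stein structure $A\car\varprojlim_m B_m$ the transition maps $N_{m+1}\to N_m$ have dense image, whence $\varprojlim^1_m N_m=0$ and each canonical map $\varprojlim_m N_m\to N_n$ has dense image.

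\emph{Surjectivity.} The image of $\Phi_n$ is precisely the $B_n$-submodule $B_n\cdot\pr^{M_\bullet}_n(M)$ of $M_n$ generated by $\pr^{M_\bullet}_n(M)$. By (ii) the set $\pr^{M_\bullet}_n(M)$ is dense in $M_n$; since $M_n$ is Noetherian over $B_n$, the submodule $B_n\cdot\pr^{M_\bullet}_n(M)$ is finitely generated, hence closed in $M_n$ by (i); a closed submodule containing a dense set equals $M_n$. This already shows $\Phi_n$ is surjective, and note that the argument applies verbatim to \emph{any} coadmissible $A$-module, a fact I will reuse below.

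\emph{Injectivity.} Here I would invoke the finite generation of coadmissible modules over Fr\'echet--Stein algebras (the Fr\'echet--Stein analogue of Cartan's Theorem A, itself a theorem of \cite{S-T2}): choose a surjection $\pi\colon A^t\tra M$ and set $K:=\ker\pi$. Base-changing $\pi$ along $A\to B_n$ and postcomposing with $\pr^{M_\bullet}_n$ gives compatible $B_n$-linear maps $\pi_n\colon B_n^t\to M_n$, surjective by the surjectivity argument applied to $M$; put $K_n:=\ker\pi_n$, finitely generated over $B_n$ by Noetherianity. The essential use of right-flatness of $B_{n+1}\to B_n$ is now this: applying $B_n\ot_{B_{n+1}}-$ to $0\to K_{n+1}\to B_{n+1}^t\to M_{n+1}\to 0$ keeps it exact, which together with coherence of $(M_m)_m$ identifies $B_n\ot_{B_{n+1}}K_{n+1}\car K_n$; hence $(K_n)_n$ is again a coherent sheaf, $\varprojlim^1_n K_n=0$ by (ii), and taking $\varprojlim_n$ of the sequences $0\to K_n\to B_n^t\to M_n\to 0$ yields $0\to\varprojlim_n K_n\to A^t\xrightarrow{\pi}M\to 0$, i.e. $\varprojlim_n K_n=K$ and $K$ is coadmissible with coherent system $(K_n)_n$. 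Finally, apply the right-exact functor $B_n\ot_A-$ to $0\to K\to A^t\xrightarrow{\pi}M\to 0$, giving $B_n\ot_A K\to B_n^t\to B_n\ot_A M\to 0$, and compare with $0\to K_n\to B_n^t\to M_n\to 0$ through the maps $\Phi^K_n$, $\id_{B_n^t}$, $\Phi^M_n$; the left-hand square commutes, and $\Phi^K_n$ is surjective (surjectivity argument, for $K$), so the image of $B_n\ot_A K$ in $B_n^t$ is $K_n$. Therefore $B_n\ot_A M\cong B_n^t/K_n$, which $\Phi^M_n$ carries isomorphically onto $M_n$.

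\emph{Topology and the hard point.} The topological assertion is then automatic: $B_n\ot_A M$ is the quotient of $B_n^t$ by $K_n$, hence carries the canonical $B_n$-module topology of $B_n^t/K_n=M_n$, which is the given topology on $M_n$, and $\Phi^M_n$ is a continuous bijection between them, hence a topological isomorphism. I expect the main obstacle to be the injectivity step and, within it, the two facts that power it: the finite generation of coadmissible modules (a genuine theorem, resting ultimately on the vanishing of $\varprojlim^1$ for coherent sheaves, i.e. on completeness of the $B_m$-modules together with density of the transition maps), and the fact that the \emph{naive} system of kernels $(K_n)_n$ is again coherent, for which right-flatness of the transition maps of the Fr\'echet--Stein structure is indispensable. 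Surjectivity, by contrast, is soft: it only combines density of $\pr^{M_\bullet}_n$ with the Noetherian closedness of finitely generated submodules.
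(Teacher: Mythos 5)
Your surjectivity argument is exactly the one the paper (following \cite{S-T2}) has in mind: density of $\pr^{M_\bullet}_n(M)$ in $M_n$ combined with closedness of finitely generated submodules over the Noetherian Banach algebra $B_n$. The gap is in the injectivity step, which rests on a false premise. You invoke ``the finite generation of coadmissible modules'' to choose a surjection $A^t \twoheadrightarrow M$, attributing this to Theorem A of \cite{S-T2}. But Theorem A there asserts only that the image of $M$ in $M_n$ generates $M_n$ as a $B_n$-module for each $n$ --- i.e.\ precisely the surjectivity you have already proved --- not that $M$ is finitely generated over $A$. Coadmissible modules over a Fr\'echet-Stein algebra need \emph{not} be finitely generated: since $M_n \cong B_n \otimes_{B_{n+1}} M_{n+1}$, the minimal number of generators of $M_n$ over $B_n$ is non-decreasing in $n$ and can be unbounded (already for $A = \cO$ of the open unit disk $\varprojlim_n \cO(\bbB_{r_n})$, take a coherent sheaf whose fibre at a point $z_i$ with $|z_i| \to 1$ has dimension $i$). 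So the presentation on which your entire diagram chase is built does not exist in general, and the injectivity argument collapses at its first move.

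The repair is to run your kernel argument element-wise, which is what \cite{S-T2} does and what the paper reproduces in the proof of Thm.~\ref{ThWFS}. Given $x = b_1 \otimes x_1 + \cdots + b_r \otimes x_r$ in the kernel, use the map $A^r \to M$, $e_i \mapsto x_i$ --- which need not be surjective --- and its levelwise versions $\psi_\ell: B_\ell^r \to M_\ell$ with kernels $N_\ell$. Right-flatness of $B_{\ell+1} \to B_\ell$ gives $B_\ell \otimes_{B_{\ell+1}} N_{\ell+1} \cong N_\ell$ (flat base change preserves kernels, so no surjectivity of $\psi_\ell$ is needed), hence $(N_\ell)_\ell$ is coherent and $N := \varprojlim_\ell N_\ell = \ker(A^r \to M)$ is coadmissible, with $(b_1,\ldots,b_r) \in N_n$. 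Your surjectivity result applied to $N$ produces finitely many elements of $N \subseteq A^r$ whose images generate $N_n$ over $B_n$; writing $(b_1,\ldots,b_r)$ as a $B_n$-combination of these and moving the coefficients across the tensor sign (using $b\cdot\pr_n(a) \otimes m = b \otimes a\cdot m$) shows $x = 0$. Everything else in your write-up --- the role of flatness, the density/closedness mechanism for surjectivity, the topological conclusion --- is sound and agrees with the paper's version of the argument.
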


\begin{rem}\label{STrem}
The argument in \cite[Cor. 3.1]{S-T2} crucially uses that the $B_n$ are (left) Noetherian. We will later adapt the argument and generalize the preceding theorem in Thm. \ref{ThWFS}.
\hfill $\blacksquare$ 
\end{rem}

When $A$ is a Fr\'echet-Stein algebra, equipped with a weak Fr\'echet-Stein structure $A \car \varprojlim_n A_n$, we have the following

\begin{theorem}
\cite[Thm. 1.2.11 (i)]{Em17}\label{lemco}
Let $A$ be a Fr\'echet-Stein algebra, equipped with a weak Fr\'echet-Stein structure $A \car\varprojlim_n A_n$. Let $M\car \varprojlim_n M_n$ be a coadmissible $A$-module, where $\{M_n\}_n$ is an $\{A_n\}_n$-sequence. Then the map 
 
$$\id_{A_n} \ot \pr^{M_\bullet}_n: A_n \ot_A M \lra A_n \ot_{A_n} M_n = M_n$$ 

\vskip8pt

induces an isomorphism  $A_n \hot_A M \ra M_n$ for all $n\geq 1$.
\end{theorem}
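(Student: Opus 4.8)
The plan is to bootstrap from Theorem~\ref{tensorbijection} --- which already settles the completion-free statement for genuine (Noetherian) Fr\'echet--Stein structures --- and transfer its conclusion to the given weak Fr\'echet--Stein structure $A\car\varprojlim_n A_n$. Since $A$ is a Fr\'echet--Stein algebra I may fix an honest Fr\'echet--Stein structure $A\car\varprojlim_m B_m$, the $B_m$ being left Noetherian Banach $E$-algebras with right-flat transition maps; in particular $A$ is Fr\'echet. The two structures present the same topological $E$-algebra $A$; as the transition maps of each system are BH, every continuous linear map from $A$ to a Banach space factors through one of the $B_m$, and, conversely, through one of the $A_n$ (using that each $A_n$ is complete and that $A$ has dense image in it). Interleaving these factorisations yields --- after passage to cofinal subsequences, which by \ref{independence} affects neither the notion of coadmissibility nor the statement --- a commutative ladder of continuous $E$-algebra maps, compatible with all transition maps and with the projections out of $A$, in which each $A_n$ is a $B_{m(n)}$-algebra and each $B_m$ is an $A_{n(m)}$-algebra. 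By \ref{independence} (\cite[Prop.~1.2.9]{Em17}), $M$ is coadmissible for $\{B_m\}$ as well; fix a $\{B_m\}$-sequence $\{M'_m\}$ for it.

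Next I invoke Theorem~\ref{tensorbijection}: $B_m\otimes_A M\car M'_m$, and since $M'_m$ is finitely generated over the Noetherian Banach algebra $B_m$ it is complete in its canonical topology, so $B_m\widehat\otimes_A M=B_m\otimes_A M=M'_m$ with no completion needed. Using associativity of the completed tensor product for the quotient-topology conventions of \cite[1.2.2--1.2.3]{Em17} --- namely $C\widehat\otimes_A N\cong C\widehat\otimes_B\bigl(B\widehat\otimes_A N\bigr)$ for continuous algebra maps $A\to B\to C$ --- and applying this to $A\to B_{m(n)}\to A_n$, to $A\to A_{n(m)}\to B_m$, and to $A\to A_{n+1}\to A_n$, I obtain
\[
A_n\widehat\otimes_A M\;\cong\;A_n\widehat\otimes_{B_{m(n)}}M'_{m(n)},\qquad
B_m\widehat\otimes_{A_{n(m)}}\bigl(A_{n(m)}\widehat\otimes_A M\bigr)\;\cong\;M'_m,
\]
together with $A_n\widehat\otimes_{A_{n+1}}\bigl(A_{n+1}\widehat\otimes_A M\bigr)\cong A_n\widehat\otimes_A M$. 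The first isomorphism, combined with a finite $B_{m(n)}$-presentation of $M'_{m(n)}$, shows that $A_n\widehat\otimes_A M$ is a finitely generated $A_n$-module. Together with the identification $\varprojlim_n\bigl(A_n\widehat\otimes_A M\bigr)\cong\varprojlim_m M'_m=M$ read off from the ladder, this exhibits $\{A_n\widehat\otimes_A M\}_n$ as an $\{A_n\}$-sequence for $M$, with structural isomorphism $M\car\varprojlim_n\bigl(A_n\widehat\otimes_A M\bigr)$ given by $m\mapsto 1\otimes m$.

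It remains to match the sequence $\{A_n\widehat\otimes_A M\}_n$ with $\{M_n\}$ through the maps in the statement. The maps $\id_{A_n}\otimes\pr^{M_\bullet}_n$ complete to continuous $A_n$-linear maps $\hat c_n\colon A_n\widehat\otimes_A M\to M_n$, compatible with the transition maps of both sequences, whose inverse limit is the identity of $M$ (trace through $m\mapsto 1\otimes m\mapsto\pr^{M_\bullet}_n(m)$). That each $\hat c_n$ is an isomorphism --- equivalently, that the $\{A_n\}$-sequence just constructed is \emph{the} $\{A_n\}$-sequence of $M$ --- is the crux; I would prove it by adapting the Noetherian-comparison argument of \cite[Cor.~3.1]{S-T2} (our Theorem~\ref{tensorbijection}) to the weak setting, using the ladder to reduce the claim at level $n$ to finitely generated modules over the Noetherian Banach algebra $B_{m(n)}$, where completions are inert (so that, incidentally, each $M_n$ is complete). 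This is the main obstacle: a morphism of inverse systems of locally convex $E$-spaces inducing an isomorphism on inverse limits need not be a levelwise isomorphism, so it is precisely here that one must use the finiteness built into coadmissibility together with the completion-free form of Theorem~\ref{tensorbijection}; a subsidiary difficulty is the careful construction of the comparison ladder and the verification that $\widehat\otimes$ is associative for the prescribed quotient topologies.
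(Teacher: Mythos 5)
The paper itself offers no proof of this statement: it is quoted verbatim from \cite[Thm.\ 1.2.11 (i)]{Em17}, so your attempt has to be judged on its own. The preparatory material is sound and is essentially the standard route (interleave the weak structure $\{A_n\}$ with a genuine Fr\'echet--Stein structure $\{B_m\}$ using BH-factorisations, apply Theorem \ref{tensorbijection} over the Noetherian levels, and push the answer back up with associativity of $\widehat\otimes$). But the proof is not complete: at the decisive moment you write that each $\hat c_n$ being an isomorphism ``is the crux'' and that you ``would prove it by adapting the Noetherian-comparison argument''. That unproved assertion \emph{is} the theorem. What you have actually shown is that $\{A_n\widehat\otimes_A M\}_n$ is \emph{some} $\{A_n\}$-sequence for $M$ whose limit maps to $M$ compatibly with $\{M_n\}$; as you yourself observe, a morphism of inverse systems inducing an isomorphism on limits need not be a levelwise isomorphism, so nothing yet identifies $A_n\widehat\otimes_A M$ with $M_n$.

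The missing link is concrete: you chose an \emph{arbitrary} $\{B_m\}$-sequence $\{M'_m\}$ for $M$ and never tied it to the given $\{M_n\}$. The argument that closes the gap runs through the ladder in the other direction: define $M'_m:=B_m\widehat\otimes_{A_{n(m)}}M_{n(m)}$, check via the interleaving and the defining isomorphisms $A_n\widehat\otimes_{A_{n+1}}M_{n+1}\cong M_n$ (iterated along the transition maps) that this \emph{is} a $\{B_m\}$-sequence for $M$, and then compute
$A_n\widehat\otimes_A M\cong A_n\widehat\otimes_{B_{m(n)}}\bigl(B_{m(n)}\otimes_A M\bigr)\cong A_n\widehat\otimes_{B_{m(n)}}B_{m(n)}\widehat\otimes_{A_{n'}}M_{n'}\cong A_n\widehat\otimes_{A_{n'}}M_{n'}\cong M_n$
for suitable $n'\ge n$, finally verifying that this composite agrees with $\hat c_n$ on the image of $1\otimes M$ and hence everywhere by density and continuity. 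Without some version of this identification (or an appeal to a uniqueness statement for $\{A_n\}$-sequences, which itself requires proof), the proposal does not establish the theorem. A secondary point to tighten: the claim $B_m\widehat\otimes_A M=B_m\otimes_A M$ needs the bijection of Theorem \ref{tensorbijection} to be a \emph{topological} isomorphism onto $M'_m$ with its canonical Banach-module topology, which should be said explicitly (it follows from Lemma \ref{cptype}-type strictness for finitely generated modules over Noetherian Banach algebras, but it is not automatic from a set-theoretic bijection).
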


We will also need the following properties of algebras of compact type.
\begin{lemma}\label{cptype}\cite[Prop. 5.1.1]{PSS}
Let $C\car\varinjlim_m C^{(m)}$ be an $E$-algebra of compact type with Noetherian Banach algebras $C^{(m)}$ over $E$.

\vskip8pt

(i) A finitely generated module $P$ over $C$ has a a unique compact type topology $P\car\varinjlim_m P_m$, called the canonical topology, such that $C\times P\ra P$ is a continuous map. We can choose each $P_m$ to be a finitely generated module over $C^{(m)}$.

\vskip5pt

(ii) A finitely generated $C$-submodule of $P$ is closed in the canonical topology.

\vskip5pt

(iii) Any $C$-linear map between finitely generated $C$-modules is continuous and strict with respect to the canonical topology of part (i).
\end{lemma}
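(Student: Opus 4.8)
The plan is to bootstrap everything from the classical theory of finitely generated modules over a single Noetherian Banach algebra and then to glue along $m$, using at each stage the open mapping theorem and the strict inductive limit structure of spaces of compact type. Concretely, for (i) I would fix a finite generating set $e_1,\dots,e_d$ of $P$, form the $C$-linear surjection $\pi\colon C^d\tra P$ with kernel $K$, and set $P_m:=\pi((C^{(m)})^d)=\sum_i C^{(m)}e_i$ for each $m$. Each $P_m$ is a finitely generated module over the Noetherian Banach algebra $C^{(m)}$, hence carries a canonical Banach topology, namely the quotient topology of any finite presentation; this is independent of the presentation, finitely generated $C^{(m)}$-submodules are closed in it, and $C^{(m)}$-linear maps out of it are automatically continuous. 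The crucial observation is that $P_m\to P_{m+1}$ is a \emph{compact} map: the composite $(C^{(m)})^d\to(C^{(m+1)})^d\to P_{m+1}$ is compact because $C^{(m)}\to C^{(m+1)}$ is, it kills $K\cap(C^{(m)})^d$, and $(C^{(m)})^d\tra P_m$ is an open surjection of Banach spaces, so it factors through a compact map $P_m\to P_{m+1}$. Hence $P=\varinjlim_m P_m$ is of compact type; separate continuity of the multiplication maps $C^{(m)}\times P_{m'}\to P_{m''}$ on the Banach pieces then upgrades to continuity of $C\times P\to P$. For uniqueness, any compact type topology on $P$ for which the action is continuous makes $\pi$ a continuous surjection of compact type spaces, hence open by the open mapping theorem, hence equal to the quotient topology determined by $K$ alone; so two such topologies agree.

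For (ii), let $Q\subseteq P$ be a finitely generated submodule; after shifting indices I may assume $Q_m\subseteq P_m$, where $Q_m$ is built from generators of $Q$ as above, and each $Q_m$ is a finitely generated $C^{(m)}$-submodule of $P_m$, hence closed in $P_m$. Since a subspace of the strict inductive limit $\varinjlim_m P_m$ is closed exactly when it meets each $P_m$ in a closed set, it is enough to observe that $Q\cap P_m=\bigcup_{m'\ge m}(P_m\cap Q_{m'})$ is an increasing union of $C^{(m)}$-submodules of the Noetherian module $P_m$, which therefore stabilises and is closed. For (iii), continuity of a $C$-linear $f\colon P\to P'$ is tested on each $P_m$: choosing $m''$ with all $f(e_i)\in P'_{m''}$, the continuous map $(C^{(m)})^d\to P'_{m''}$, $(c_i)\mapsto\sum_i c_i f(e_i)$, kills $K\cap(C^{(m)})^d$ and factors through the open surjection $(C^{(m)})^d\tra P_m$, so $P_m\to P'_{m''}\hra P'$ is continuous; letting $m$ vary gives continuity of $f$. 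Strictness then follows: $\im(f)$ is a finitely generated $C$-submodule, hence closed in $P'$ by (ii); a closed subspace of a compact type space is again of compact type, and its subspace topology makes the $C$-action continuous, so by the uniqueness in (i) it equals the canonical topology of $\im(f)$; thus $f\colon P\tra\im(f)$ is a continuous surjection of compact type spaces, hence open, which is exactly the assertion of strictness.

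The only genuinely delicate point — and the place I expect the argument to need care — is the bookkeeping across the levels $m$. The algebra $C$ itself is not Noetherian, so a submodule of a finitely generated $C$-module need not be finitely generated; Noetherian hypotheses may only be used at the Banach level $C^{(m)}$. This is exactly what powers the stabilisation step in (ii), and it is there that one leans on the strict inductive limit topology of $\varinjlim_m P_m$, for which closedness is a level-by-level condition. The recurring technical tool is the open mapping theorem, invoked both for surjections of Banach spaces (to see the transition maps are compact and to push linear maps through quotients) and for surjections of spaces of compact type (to get openness, and hence the uniqueness in (i) and the strictness in (iii)).
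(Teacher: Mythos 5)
The paper does not actually prove this lemma: it is imported verbatim from \cite[Prop.~5.1.1]{PSS}, so there is no internal argument to compare yours against. Judged on its own, your proof is essentially the standard one and is correct in outline: quotient presentations $(C^{(m)})^d/(K\cap(C^{(m)})^d)$ with the Noetherian hypotheses confined to the Banach levels, compactness of the transition maps inherited from $C^{(m)}\to C^{(m+1)}$ via the open mapping theorem, uniqueness and strictness both reduced to the open mapping theorem for continuous surjections of compact type spaces, and the Noetherian stabilisation of $\bigcup_{m'\ge m}(P_m\cap Q_{m'})$ for closedness of submodules.

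Two points deserve tightening. First, $\varinjlim_m P_m$ is \emph{not} a strict inductive limit (the transition maps are compact injections, not topological embeddings onto closed subspaces), so you cannot quote the strict-LF criterion; the fact you actually need --- a subspace of a compact type space is closed as soon as it meets each $P_m$ in a closed set --- is true, but for the different reason that compact inductive limits are regular and boundedly retractive (every convergent sequence lives and converges in some $P_m$) and such spaces are sequential; this is where the hypothesis of \emph{compact} type, as opposed to a general LB-space, is genuinely used, and for general LB-spaces the criterion fails. Second, the passage from continuity of the maps $C^{(m)}\times P_{m'}\to P_{m''}$ on Banach pieces to joint continuity of $C\times P\to P$ is not automatic for inductive limits and needs the statement (available for compact type spaces, cf.\ \cite[Prop.~1.1.31]{Em17}) that a bilinear map out of a product of compact type spaces is jointly continuous once it is so on the Banach building blocks. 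With those two citations supplied, the argument is complete.
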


Our main goal in this section is to prove the

\begin{theorem}\label{ThWFS}
Let $A$ be a weak Fr\'echet-Stein algebra with a weak Fr\'echet-Stein structure $A\car\varprojlim_n A_n$, where each $A_n$ is assumed to be an $E$-algebra  of compact type. Suppose $\{B_n, \tau^B_{n+1,n}\}_{n\geq 1}$ is a projective system of left Noetherian Banach $E$-algebras with the following properties

\vskip8pt

\begin{enumerate}
    \item There is a morphism $(i_n)_n: \{B_n,\tau^B_{n+1,n}\}_n \ra \{A_n,\tau^A_{n+1,n}\}_n$ of projective systems, such that all $B_n \xrightarrow{i_n} A_n$ are continuous morphisms of $E$-algebras. In particular, $i_n \circ \tau^B_{n+1,n} = \tau^A_{n+1,n} \circ i_{n+1}$.
    
    \vskip5pt
    
    \item The map 
    
$$i := \varprojlim_n i_n: \varprojlim_n B_n \lra \varprojlim_n A_n \car A$$ 

\vskip8pt

\noindent is an isomorphism of topological $E$-algebras, with the additional property that the sequence $\{B_n, \tau^B_{n+1,n}\}_{n\geq 1}$ is a Fr\'echet-Stein structure on $A$.\footnote{cf. the following remark}
\end{enumerate}

\vskip8pt
 
Let $M\car \varprojlim_n M_n$ be a coadmissible module with respect to the weak Fr\'echet-Stein structure given by $\{A_n\}_n$, with each $M_n$ a finitely generated $A_n$-module equipped with the canonical compact type topology. Then the canonical map  

$$\phi_n := \id_{A_n} \ot \pr^{M_\bullet}_n:  A_n \ot_A M \lra  M_n$$ 

\vskip8pt

is a bijection.
\end{theorem}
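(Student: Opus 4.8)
The plan is to mimic the proof of \cite[Cor. 3.1]{S-T2} (our Theorem \ref{tensorbijection}), but to work with the two different structures on $A$ simultaneously: the weak Fr\'echet-Stein structure $\{A_n\}_n$ (with respect to which $M$ is coadmissible, so that we have the $A_n$-modules $M_n$) and the Fr\'echet-Stein structure $\{B_n\}_n$. Since $\{B_n\}_n$ is a Fr\'echet-Stein structure on $A$, the module $M$ is coadmissible with respect to it as well by \cite[Prop. 1.2.9]{Em17}; let $\{N_n\}_n$ be a corresponding $\{B_n\}_n$-sequence with $N_n = B_n \hot_A M$ (cf. Theorem \ref{lemco}). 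Because $B_n$ is Noetherian, Theorem \ref{tensorbijection} already gives that the \emph{algebraic} tensor product $B_n \ot_A M \to N_n$ is a bijection. The strategy is therefore to factor $\phi_n$ through this known isomorphism and to show that the remaining map $A_n \ot_{B_n} N_n \to M_n$ is a bijection.

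First I would establish the factorization. Using $i_n: B_n \to A_n$, base change gives a natural $A_n$-linear map
$$A_n \ot_{B_n} N_n = A_n \ot_{B_n} (B_n \ot_A M) \xrightarrow{\ \sim\ } A_n \ot_A M \xrightarrow{\ \phi_n\ } M_n,$$
and since $B_n \ot_A M \to N_n$ is a bijection, it suffices to prove that $A_n \ot_{B_n} N_n \to M_n$ is a bijection. Now both sides are finitely generated $A_n$-modules: $M_n$ by hypothesis, and $A_n \ot_{B_n} N_n$ because $N_n$ is a finitely generated $B_n$-module (coadmissibility with respect to $\{B_n\}_n$). Moreover $\{A_n \ot_{B_n} N_n\}_n$ is again an $\{A_n\}_n$-sequence for a coadmissible $A$-module: the transition isomorphisms $A_n \hot_{A_{n+1}} N_{n+1} \cong A_n \hot_{B_n}(B_n \hot_{B_{n+1}} N_{n+1}) \cong A_n \hot_{B_n} N_n$ follow from the corresponding ones for $\{N_n\}_n$ together with associativity of $\hot$ and the compatibility $i_n \circ \tau^B_{n+1,n} = \tau^A_{n+1,n}\circ i_{n+1}$. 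Passing to the projective limit, $\varprojlim_n (A_n \ot_{B_n} N_n)$ is a coadmissible $A$-module, and the maps $A_n \ot_{B_n} N_n \to M_n$ assemble into a morphism of coadmissible $A$-modules $\varprojlim_n (A_n \ot_{B_n} N_n) \to M$ which is an isomorphism on the inverse limit (both compute $M$).

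The remaining point is to descend bijectivity from the inverse limit to each finite level $n$. Here I would invoke the structure theory: a morphism of coadmissible modules which is an isomorphism is an isomorphism of $\{A_n\}_n$-sequences, i.e.\ it induces isomorphisms $A_n \ot_{B_n} N_n \xrightarrow{\sim} M_n$ for every $n$ --- this is where one uses that the completed tensor functors $A_n \hot_A (-)$ reconstruct the sequence terms from the limit (Theorem \ref{lemco}), together with the fact that on finitely generated modules over the compact-type algebra $A_n$ all linear maps are strict (Lemma \ref{cptype}(iii)), so that $A_n \hot_A M = A_n \ot_A M$ already on the algebraic level and one does not lose anything by completing. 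Concretely: since $A_n \ot_{B_n} N_n \to M_n$ is a continuous $A_n$-linear map of finitely generated compact-type $A_n$-modules inducing an isomorphism $\varprojlim$, and since $A_n \hot_A(\varprojlim_m M_m) \cong M_n$ canonically, applying $A_n \hot_A(-)$ to the limit isomorphism recovers exactly $A_n \ot_{B_n} N_n \to M_n$ and shows it is bijective.

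The main obstacle I anticipate is the bookkeeping around completed versus algebraic tensor products at level $n$. Lemma \ref{cptype} is the tool that resolves it: because $M_n$ and $A_n \ot_{B_n} N_n$ are finitely generated over the compact-type algebra $A_n$, their canonical topologies make every $A_n$-linear map automatically continuous and strict, so $\hot$ and $\ot$ agree on them and no genuine completion is needed --- this is precisely the improvement over \cite[Thm. 1.2.11]{Em17}, and it is the reason the compact-type hypothesis on the $A_n$ enters. One must check carefully that the finite generation and the compact-type topology propagate through the base change $A_n \ot_{B_n}(-)$, but since $i_n: B_n \to A_n$ is a continuous algebra map from a Noetherian Banach algebra to a compact-type algebra, $A_n \ot_{B_n} N_n$ is a finitely generated $A_n$-module and inherits its canonical compact-type topology by Lemma \ref{cptype}(i), so this is routine.
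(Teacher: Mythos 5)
Your opening move --- factoring $\phi_n$ through $B_n\ot_A M$ via Theorem~\ref{tensorbijection} to see that $A_n\ot_A M\cong A_n\ot_{B_n}(B_n\ot_A M)$ is a \emph{finitely generated} $A_n$-module --- is exactly the right first step and coincides with the paper's. But the rest of the argument has a genuine gap, concentrated in two places. First, you assert that $\{A_n\ot_{B_n}N_n\}_n$ is an $\{A_n\}_n$-sequence for $M$, in particular that $\varprojlim_n(A_n\ot_{B_n}N_n)\cong M$ (``both compute $M$''). This is not justified: the natural map $M\ra\varprojlim_n(A_n\ot_A M)$, $m\mapsto(1\ot m)_n$, splits the map in the other direction, so you get a surjection $\varprojlim_n(A_n\ot_A M)\tra M$, but its injectivity would follow from injectivity of each $\phi_n$ --- which is what you are trying to prove. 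There is no general theorem producing the limit of a compatible system of finitely generated modules over a \emph{weak} Fr\'echet--Stein structure; in Emerton's framework the identification of the limit is part of the data, not a consequence. Second, the claim that ``$A_n\hot_A M=A_n\ot_A M$ already on the algebraic level'' because of Lemma~\ref{cptype} conflates two topologies: the completed tensor product is the completion of $A_n\ot_A M$ in the \emph{quotient} topology inherited from $A_n\ot_{E,\pi}M$, whereas Lemma~\ref{cptype} speaks of the canonical topology on a finitely generated $A_n$-module. Showing the completion map $A_n\ot_A M\ra A_n\hot_A M$ is injective (equivalently, that nothing is lost by completing) is precisely the injectivity of $\phi_n$, and it does not follow formally from finite generation.

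The paper closes these gaps non-formally. Surjectivity: the image of $\phi_n$ is a finitely generated $A_n$-submodule of $M_n$, hence closed in the canonical topology by Lemma~\ref{cptype}(ii), and dense by Theorem~\ref{lemco}; closed plus dense gives all of $M_n$. Injectivity is then proved by the Schneider--Teitelbaum syzygy argument: given $x=\sum_j b_j\ot x_j$ in the kernel, one forms the coadmissible kernel module $N=\varprojlim_\ell\ker\bigl(A_\ell^r\xrightarrow{\psi_\ell}M_\ell\bigr)$, applies the \emph{already established surjectivity} to $N$ to lift generators of $N_n$ to global elements of $N\subseteq A^r$, and then a direct computation in $A_n\ot_A M$ shows $x=0$. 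This use of surjectivity for the auxiliary module $N$ is the essential idea your proposal is missing; without it (or an equivalent), the injectivity cannot be extracted from categorical manipulations with coadmissible modules alone.
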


\begin{rem}\label{implications} When we assume in the statement of the preceding theorem that the sequence $\{B_n, \tau^B_{n+1,n}\}_{n\geq 1}$ is a Fr\'echet-Stein structure on $A$, then this means that the following conditions are supposed to be satisfied:

\vskip8pt

\begin{enumerate} 
\item For every $n \in \Ne$ the transition map $\tau^B_{n+1,n}: B_{n+1} \ra B_n$ makes $B_n$ a right flat $B_{n+1}$-module.

\vskip5pt

\item  Each map $\pr^{B_\bullet}_n: A \ra B_n$, obtained from composing $i^{-1}: A \ra \varprojlim_n B_n$ with the canonical projection map $ \varprojlim_n B_n \ra B_n$, has dense image. 
\end{enumerate}

\vskip8pt

As each $B_n$ is assumed to be a Banach algebra, it is hereditarily complete, cf. \cite[after Def. 1.1.39]{Em17}.  \hfill $\blacksquare$
\end{rem}

\begin{proof} As mentioned in \ref{STrem}, this is an adaptation of the argument in \cite[Cor. 3.1]{S-T2}. Instead of assuming that the $A_n$ are Noetherian we generalize it for algebras of compact type using lemma \ref{cptype}. We also mention that the proof of the injectivity of $\phi_n$ given below is essentially the same as the proof in \cite[Cor. 3.1]{S-T2} with additional details furnished. 

\vskip8pt

(i) {\it Surjectivity.} We consider $B_n$ as an $A$-module via the map $\pr^{B_\bullet}_n: A \ra B_n$ introduced in \ref{implications}. By \ref{independence} $M$ is also a coadmissible module for the Fr\'echet-Stein structure $A \car \varprojlim_n B_n$ on $A$. Let $\{M_n^B\}_n$ be a $\{B_n\}_n$-sequence for $M$. By \ref{tensorbijection} the map 

$$\id_{B_n} \ot \pr^{M^B_\bullet}_n: B_n \ot_A M \lra  B_n \otimes_{B_n} M^B_n = M^B_n$$ 

\vskip8pt

is an isomorphism, and $B_n \ot_A M$ is hence a finitely generated $B_n$-module (because $M^B_n$ is a finitely generated $B_n$-module). Furthermore, $A_n\ot_A M \cong A_n\ot_{B_n} B_n\ot_A M$. Consequently $\im(\phi_n)$ is a finitely generated $A_n$-submodule of $M_n$. Therefore, $\im(\phi_n)$ is closed in the canonical topology of $M_n$ (Lemma \ref{cptype} (ii), here $C= A_n, P= M_n$). By  \ref{lemco}, the map  $\phi_n = \id_{A_n} \ot \pr^{M_\bullet}_n: A_n \ot_A M \ra M_n$ induces an isomorphism $A_n \hot_A M \car M_n$, and the image of $\phi_n$ is thus dense in $M_n$.  Density and closedness of the image shows that $\phi_n$ is surjective.

\vskip8pt

(ii) {\it Injectivity.} We will use the surjectivity result from above to prove injectivity of $\phi_n$. Suppose $x:= b_1\otimes x_1+ \ldots+ b_r\otimes x_r \in A_n \ot_A M$ is such that 

$$(\id_{A_n} \otimes \pr^{M_\bullet})(x) = b_1.\pr^{M_\bullet}_{n}(x_1) + \ldots + b_r.\pr^{M_\bullet}_{n}(x_r) = 0$$ 

\vskip8pt

in $M_n$. For $\ell \geq n$ we consider the map

$$\psi_\ell: A_\ell^r \lra M_\ell \;, \;\; (a_1,\ldots,a_r) \longmapsto a_1.\pr^{M_\bullet}_\ell(x_1) + \ldots + a_r.\pr^{M_\bullet}_\ell(x_r) \;.$$ 

\vskip8pt

Let us denote the kernel of this map by $N_\ell$. Thus, for each $\ell \geq n$ we have an exact sequence

$$0\rightarrow N_\ell\hookrightarrow A^r_\ell\stackrel{\psi_\ell}{\longrightarrow} M_\ell \;.$$

\vskip8pt

Setting $N = \varprojlim_\ell N_\ell$ and passing to the projective limit (which is left exact), we get an exact sequence 

$$0\rightarrow N\hookrightarrow  A^r \stackrel{\psi}{\longrightarrow}  M \;,$$

\vskip8pt

where the rightmost map is given by  $(a_1,\ldots,a_r) \mapsto a_1.x_1 + \ldots + a_r.x_r$. The category of coadmissible modules is closed under taking kernels \cite[Thm. 1.2.11 (ii)]{Em17}, \cite[Cor. 3.4 (ii)]{S-T2}. In particular $N$ is a coadmissible module with the corresponding $\{A_\ell\}_\ell$-sequence $\{N_\ell\}_\ell$. Further, $N_\ell$ is a finitely generated module over $A_\ell$ (by definition of ``coadmissible module") and it then follows from the above argument that the map from $\id_{A_n} \ot \pr^{N_\bullet}_n: A_n \ot_A N \rightarrow N_n$ is surjective. We can then find elements $(c_1^{(1)},\ldots,c_r^{(1)}),\ldots,(c_1^{(s)},\ldots,c_r^{(s)})$ of $N\subseteq A^r$ whose images under $\pr_n^{N_\bullet}: N\ra N_n$ generate $N_n$ as an $A_n$-module. Recall the element $x= b_1\ot x_1+\cdots+ b_r\ot x_r$ from above. By definition of $x$ we see that $(b_1,\ldots,b_r) \in \ker(\psi_n) = N_n$. Thus there exists $f_1,\ldots,f_s \in A_n$ such that 

$$(b_1,\ldots,b_r)= f_1.(\pr_n^{A_\bullet}(c_1^{(1)}), \ldots, \pr^{A_\bullet}_n(c_r^{(1)}))+ \ldots + f_s. (\pr^{A_\bullet}_n(c_1^{(s)}), \ldots, \pr^{A_\bullet}_n(c_r^{(s)})) $$

\vskip8pt

It follows that

\begin{align*}
    b_1\otimes x_1+ \ldots+ b_r\otimes x_r &= \sum_{j=1}^{r}(b_j\otimes x_j) = \sum_{j=1}^{r}\Big(\sum_i f_i.\pr^{A_\bullet}_n(c_j^{(i)})\Big)\otimes x_j\\
    & = \sum_{j=1}^{r}\Big(\sum_{i}f_i\otimes c_j^{(i)}x_j\Big) = \sum_{i}f_i\otimes \Big(\sum_j c_j^{(i)}x_j\Big) \\
    & = \sum f_i\otimes 0 = 0
\end{align*}

\vskip8pt

The third equality follows from $a_n.\pr^{A_\bullet}_n(b) \ot m = a_n \ot b.m \in A_n\ot_A M$. So, $\phi_n: A_n \ot_A M \ra M_n$ is also injective.
\end{proof}

\section{Uniform pro-\texorpdfstring{$p$}{} groups, \texorpdfstring{$F$}{}-uniform groups, and rigid analytic groups}

\begin{para}{\it Uniform pro-$p$ groups.} In this section, $H$ will always denote a pro-$p$ group, whose group law we always write multiplicatively. $H$ is called \textit{powerful} \cite[Def. 3.1. (i)]{DDMS} if $p$ is odd and $H/\overline{H^p}$ is abelian or $p=2$ and $H/\overline{H^4}$ is abelian. The lower $p$-series $(P_i(H))_{i \ge 1}$ of $H$ is defined by

$$P_1(H)= H \;, \;\; P_{i+1}(H):=\overline{P_i(H)^p[P_i(H),H]}$$

\vskip8pt

where $[P_i(H), H]$ is the commutator of these two groups within $H$. 

\vskip8pt

A powerful pro-$p$ group $H$ is called \textit{uniformly powerful pro-$p$} or \textit{uniform pro-$p$} \cite[Def. 4.1]{DDMS} if 
\begin{enumerate}
    \item $H$ is finitely generated as  a pro-$p$ group.
    
    \item  For all $i$ the $p$-th power map $x \mapsto x^p$ induces an isomorphism of abelian groups
    
    $$P_i(H)/P_{i+1}(H) \lra P_{i+1}(H)/ P_{i+2} (H)$$

\end{enumerate}

\vskip8pt

If $H$ is a topologically finitely generated powerful pro-$p$ group, then $P_{i+1}(H) = \{ x^{p^i} \midc x \in H\}$ for every $i \ge 0$, cf. \cite[3.6 (iii)]{DDMS}. If $H$ is uniform pro-$p$, then the map $x \mapsto x^{p^i}$ is a homeomorphism $H \ra P_{i+1}(H)$, in particular, each element $z \in P_{i+1}(H)$ has a unique $p^i$-th root in $H$ which we denote by $z^{p^{-i}}$. 
\end{para}

\vskip8pt

\begin{para}\label{ZpLie} {\it The $\Zp$-Lie algebra associated to a uniform pro-$p$ group \cite[sec. 4.3, 4.5]{DDMS}.} In this paragraph, let $H$ be a uniform pro-$p$ group, and set $H_n = P_{n+1}(H) = \{x^{p^n} \midc x \in H\}$ for $n \ge 0$. For $x,y \in H$, the element $x^{p^n}y^{p^n}$ lies in $H_n$ and hence has a unique $p^n$-th root $(x^{p^n}y^{p^n})^{p^{-n}}$, as recalled above. By the discussion in \cite[sec. 4.3]{DDMS}, the limit 

$$x+y =\lim_{n \ra \infty} (x^{p^n}y^{p^n})^{p^{-n}}$$

\vskip8pt

exists in $H$ and gives $H$ the additional structure of an {\it abelian} group. The multiplication $\Z \times H \ra H$ extends continuously to $\Zp \times H \ra H$, and provides $(H,+)$ with the structure of a finitely generated free $\Zp$-module, whose rank is equal to the minimal number of topological generators of $H$.

\vskip8pt

As usual, we denote by $[x,y]_H = xyx^{-1}y^{-1}$ the commutator of $x,y \in H$. Note that  $[x^{p^n}, y^{p^n}]_H \in [H_n, H_n] \subseteq H_{2n+1}$ for $x,y \in H$. We then endow $H$ with a Lie bracket by setting

$$[x, y]_\Lie := \lim_{n \ra \infty} [x^{p^n}, y^{p^n}]_H^{p^{-2n}} \;,$$

\vskip8pt

where the limit exists by \cite[4.28]{DDMS}. By \cite[Thm. 4.30]{DDMS} the triple $L_H := (H, \, + \,, \, [\, , \, ]_\Lie)$ is a $\Zp$-Lie algebra which is \textit{powerful} in the sense of \cite[sec. 9.4]{DDMS}, i.e.,

$$[L_H, L_H] \subseteq p^\eps L_H \;,$$ 

\vskip8pt

where $\epsilon= 1$ if $p$ is odd and is equal to $2$ if $p=2$. We denote by $\exp_H$ the map $L_H \ra H$ which is the identity map on the underlying sets. The $\Qp$-Lie algebra $L_H \otimes_\Zp \Qp$ is canonically isomorphic to the Lie algebra $\Lie(H)$ of $H$ as a $p$-adic Lie group,\footnote{as defined in, for instance, \cite[ch. VII]{Schneider_Lie}} and the exponential map $\Lie(H) \dashrightarrow H$, which is only defined on a sufficiently small lattice in $\Lie(H)$, is equal to $\exp_H$ (as defined here), when restricted to $L_H \sub \Lie(H)$. 

\vskip8pt

By \cite[9.10]{DDMS} the functor $H \rightsquigarrow L_H$ defines an equivalence of between the category of uniform pro-$p$ groups and the category of powerful $\Zp$-Lie algebras. 
\end{para}

\vskip8pt

\begin{para} {\it From powerful $\Zp$-Lie algebras to uniform pro-$p$ groups.} The Baker-Campbell-Hausdorff (BCH) formula \cite[6.28]{DDMS} is a formal series

$$\Phi(X,Y):=\sum_{n=1}^{\infty}u_n(X,Y)\;,$$

\vskip8pt 

whose terms $u_n(X,Y)$ are in the free associative $\Q$-algebra generated by two variables $X$ and $Y$. If we denote by $[f,g] = fg-gf$ the commutator of two elements in this ring, then we have the following formulas 

$$u_1(X, Y) = X + Y\;, \;\; u_2(X, Y )= \frac{1}{2}[X, Y]\;,$$

$$u_n(x,y) = \sum_{\mbox{{\tiny $\begin{array}{c}e = (e_1, \ldots, e_n) \in \Ne^n\\ e_1+ \ldots + e_n = n-1 \end{array}$}}} q_e [X, Y]_e \;.$$

\vskip8pt

Here 

$$[X, Y]_e = \Big[\ldots [X, [\underbrace{Y,\ldots, Y}_{e_1}, \underbrace{X,\ldots, X}_{e_2}, \ldots] \ldots\Big]$$ 

\vskip8pt

denotes an iterated commutator, and the $q_e$ are certain constants in $\Q$. If $L$ is a powerful $\Zp$-Lie algebra, then for any $x,y \in L$ each term $u_n(x,y)$, which a priori is in $L \otimes \Q$, lies actually in $L$, and the series $\Phi(x,y)$ converges in $L$. Moreover, the map 

$$L \times L \lra L \;, \;\; (x,y) \mapsto x \cdot_\Phi y := \Phi(xy)$$ 

\vskip8pt

defines a group structure on $L$. By \cite[9.8]{DDMS} the pair $(L, \, \cdot_\Phi \,)$ is a uniform pro-$p$ group. In fact the functor from the category of powerful $\Zp$-Lie algebras to the category of uniform pro-$p$ groups, $(L, [\,,\,]) \rightsquigarrow (L, \, \cdot_\Phi \,) $ is a quasi-inverse to the functor  $H \rightsquigarrow L_H $, cf. \cite[9.10]{DDMS}.
\end{para}

\vskip8pt

\begin{dfn}
A locally $F$-analytic group\footnote{cf. \cite[sec. 13]{Schneider_Lie}, where those groups are called ''Lie groups over $F$''} $H$ is called \textit{$F$-uniform pro-$p$} if the following conditions hold:

\begin{enumerate}
    \item $H$ is a uniform pro-$p$ group.
    
    \vskip5pt
    
    \item The $\mathbb{Z}_p$-Lie algebra $L_H$, which embeds canonically into $\Lie(H) = L_H \otimes_\Zp \Qp$ (cf. end of sec. \ref{ZpLie}), is stable under multiplication by $\cO_F$ on $\Lie(H)$. (In particular, $L_H$ has the structure of an $\cO_F$-module.)
\end{enumerate}
\end{dfn}

\vskip8pt
 
 \begin{para}\label{rigid} {\it Rigid analytic groups associated to $F$-uniform groups.} Let $H$ be an $F$-uniform pro-$p$ group. The $\cO_F$-module $L_H$ is free of finite rank $d$ over $\cO_F$. We choose an $\cO_F$-basis $\der_1, \ldots, \der_d$ of $L_H$. For $x_1, \ldots, x_d , y_1, \ldots, y_d \in \cO_F$ we can write
 
 $$\Phi(x_1\der_1+ \ldots + x_d \der_d, y_1\der_1+ \ldots + y_d \der_d) = \sum_{j=1}^d \Phi_j(x_1, \ldots, x_d, y_1, \ldots,y_d)\der_j$$
 
 \vskip8pt
 
 where each $\Phi_j$ is a power series in the ring $\cO\langle x_1, \ldots, x_d, y_1, \ldots,y_d \rangle$ of strictly convergent power series in $x_1, \ldots, x_d, y_1, \ldots,y_d$. Consider $L_H^\vee := \Hom_{\cO_F}(L_H,\cO_F)$ and let $(\der_j^\vee)_j$ be the dual basis to $(\der_j)_j$, i.e., $\der_j^\vee(\der_i) = \delta_{i,j}$. Then the $p$-adic completion $\Sym_{\cO_F}(L_H^\vee)^\wedge$ of the symmetric algebra $\Sym_{\cO_F}(L_H^\vee) \cong \cO_F[\der_1^\vee, \ldots, \der_d^\vee]$ is ismormorphic to $\cO_F\langle \der_1^\vee, \ldots, \der_d^\vee \rangle$. Via those isomorphisms we define a comultiplication
 
 \begin{numequation}\label{comult}\Sym_{\cO_F}(L_H^\vee)^\wedge \lra \Sym_{\cO_F}\Big((L_H \oplus L_H)^\vee\Big)^\wedge
 \end{numequation}
 
by sending $\der_j^\vee$ to $\Phi_j(\der^\vee_1, \ldots, \der^\vee_d, \der^\vee_1, \ldots,\der^\vee_d)$.
This gives the affinoid space 

$$\bbH = \Sp\left(\Sym_{\cO_F}(L_H^\vee)^\wedge \ot_{\cO_F} F\right)$$

\vskip8pt

the structure of a rigid analytic group, which is independent of the choice of the basis $(\der_j)_j$ made above. More generally, if $\vpi$ is a uniformizer of $F$, then we denote by $\bbH_n \sub \bbH$ the rigid analytic affinoid subgroup that one obtains by replacing $L_H$ by $\vpi^n L_H$. If $\exp_H: L_H \ra H$ is the exponential map as in \ref{ZpLie}, then the group of $F$-valued points of $\bbH_n$ is $\exp_H(\vpi^n L_H)$.

\vskip8pt

 It follows from the discussion in \cite[p. 100]{Em17} that for any $n \ge 0$ the rigid analytic group $\bbH_n$ is a good analytic subgroup of $H$, using the terminology introduced by Emerton in \cite{Em17}. Thus the subsequent constructions of distribution algebras, recalled in the next section, apply in our setting. 
 \end{para}
 
 \vskip8pt
 
 \begin{para}\label{wideopen} {\it The wide open groups $\bbH_n^\circ$.} Because the comultiplication map \ref{comult} is defined integrally, it equips the formal scheme $\frH = \Spf\Big(\Sym_{\cO_F}(L_H^\vee)^\wedge\Big)$ with the structure of a formal group scheme over $\Spf(\cO_F)$. Let $\frH^\circ$ be the completion of $\frH$ along its unit section $e_\frH: \Spf(\cO_F) \ra \frH$. This is a affine formal group scheme over $\Spf(\cO_F)$ whose corresponding $\cO_F$-algebra $\cO(\frH^\circ)$ is the completion of $\Sym_{\cO_F}(L_H^\vee)$ with respect to the augmentation ideal $I := \ker\Big(\Sym_{\cO_F}(L_H^\vee) \xrightarrow{e_\frH^*} \cO_F\Big)$. Using the basis $(\der_j^\vee)_j$ introduced above, one has an isomorphism $\cO(\frH^\circ) \cong \cO_F[[\der_1^\vee, \ldots, \der_d^\vee]]$. We then let $\bbH^\circ$ be the rigid analytic group associated to the group scheme $\frH^\circ$ (by Berthelot's construction \cite[sec. 7]{deJongCrys}). In the same way we construct the rigid analytic group $\bbH_n^\circ$ starting from the $\Zp$-Lie algebra $\vpi^n L_H$, which we call {\it wide open}. One has $\bbH_{n+1} \sub \bbH_n^\circ \sub \bbH_n$ and $\bbH_n^\circ(F) = \bbH_{n+1}(F)$. 

 \end{para}
 
\section{Distribution Algebras (summary of results)} We recall that $E/F$ denotes a  finite field extension. Given a compact, locally $F$-analytic group $G$ we let $C^\la(G,E)$ be the locally convex $E$-vector space of locally $F$-analytic functions $G \ra E$. The space $C^\la(G,E)$ is of compact type and is a locally $F$-analytic  representation of $G$ via the right regular action of $G$, i.e., $g. f(x)= f( x. g^{-1})$ (cf. \cite[sec. 2]{S-T} for details of the construction and the topology on the space). The strong dual $D^\la(G,E):= (C^\la(G,E)'_b)$ of $C^\la(G,E)$ is called the locally $F$-analytic distribution algebra of $G$ (with coefficients in $E$). $D^\la(G,E)$ is an $E$-Fr\'echet space and it has an $E$-algebra structure with convolution product as multiplication. For future use we note that for each $g \in G$ we have the delta distribution $\delta_g \in D^\la(G,E)$ defined by $\delta_g(f)= f(g)$. 

\vskip8pt

From now on we fix an $F$-uniform pro-$p$ group. Given a wide open rigid analytic group $\bbH_n^\circ$ as above, we let $\cD^\an(\bbH_n^\circ,E) = \Hom^\cont_F(\cO(\bbH_n^\circ),E)'_b$ be the analytic distribution algebra of Emerton, cf. \cite[2.2.2]{Em17}. Moreover, we consider the $E$-algebra 

$$D(\bbH^\circ_n, H) := \big( C^{\la}(H,E)_{\bbH_n^\circ-\an}\big)'_b$$ 

which is the strong dual of the $\bbH_n^\circ$-rigid analytic vectors of $C^{\la}(H,E)$. It is shown in \cite[ proof of 5.3.1]{Em17} that


$$D(\bbH^\circ_n, H) = \oplus_{h\in H/H_{n+1}}\;\delta_h \cdot \cD^\an(\bbH_n^{\circ}, E)\,,$$

\vskip8pt

where $H_{n+1} = \bbH_{n+1}(F) = \bbH_n^\circ(F)$. As in \ref{rigid}, we choose an $\cO_F$-basis  $\partial_1,\ldots,\partial_d$ of the Lie algebra $L_H$ corresponding to $H$. As in \cite[p. 102]{Em17} we have the following $\cO_E$-subalgebra of the universal enveloping algebra $U(L_H)$

$$A^{(m)}:=\Big\{\sum_I b_I \frac{q_{i_1}^{(m)}!\ldots q_{i_d}^{(m)}!}{i_1!\ldots i_d!}\partial_1^{i_1}\ldots \partial_d^{i_d} \,\Big|\; b_I\in \cO_E,\\
b_I= 0 \; \text{for\; almost\; all}\; I\Big\}$$

\vskip8pt

where $q^{(m)}_i=\left\lfloor \frac{i}{p^m} \right\rfloor$. Let $\widehat{A^{(m)}}$ be the $p$-adic completion of $A^{(m)}$. We recall the definitions of \cite[p. 97, 107]{Em17}

$$\cD^\an(\bbH_n^{\circ}, E)^{(m)} := E\ot_{\cO_E}\widehat{A^{(m)}}$$

\vskip8pt

and


$$\begin{array}{rl} D(\bbH^\circ_n, H)^{(m)} & := E[H] \ot_{E[H_{n+1}]} \Danhn^{(m)} \\
&\\
& =\oplus_{h\in H/H_{n+1}} \; \delta_h \cdot \cD^\an(\bbH_n^{\circ}, E)^{(m)}
\end{array}$$


\vskip8pt

It is shown in the proof of \cite[5.3.19]{Em17} that for every $n\geq 0$ there exists a $m_n \in \Ne$ such that the natural map $D(\bbH_{n+1}^\circ, H) \ra D(\bbH_n^{\circ},H)$ factors as 

\begin{numequation}\label{inclusions}
D(\bbH_{n+1}^\circ,H) \lra D(\bbH_n^\circ,H)^{(m_n)} \lra D^\an(\bbH_n^\circ, H) \;.
\end{numequation}

Moreover we have the following important results.

\begin{theorem}\label{structure1}\cite[5.3.11, 5.2.6, and remark before 5.2.6]{Em17} (i) $D(\bbH_n^{\circ}, H)^{(m)}$ is a Noetherian Banach algebra for each $m, n \in \Ne$.

\vskip8pt

(ii) There exists a natural isomorphism of topological $E$-algebras 

$$\varinjlim_m D(\bbH_n^{\circ}, H)^{(m)} \car  D(\bbH_n^\circ, H) \,.$$

\vskip8pt

This isomorphism makes $ D(\bbH_n^\circ, H)$  an $E$-algebra of compact type. \qed
\end{theorem}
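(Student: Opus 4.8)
This is Emerton's theorem \cite[5.3.11, 5.2.6]{Em17}, and the plan is to reproduce the argument along the lines Schneider--Teitelbaum developed for the rings $D_r(G)$. I would first reduce the whole of (i) to the assertion that the ``connected'' piece $\cD^\an(\bbH_n^\circ,E)^{(m)}=E\ot_{\cO_E}\widehat{A^{(m)}}$ is a Noetherian Banach $E$-algebra. Indeed, since $H$ is a finitely generated pro-$p$ group and $H_{n+1}$ is open, $[H:H_{n+1}]$ is finite, so by construction $D(\bbH_n^\circ,H)^{(m)}=\bigoplus_{h\in H/H_{n+1}}\delta_h\cdot\cD^\an(\bbH_n^\circ,E)^{(m)}$ is free of finite rank as a module over $\cD^\an(\bbH_n^\circ,E)^{(m)}$ on either side --- on the right this is the displayed decomposition, and on the left one uses that $H$ normalizes the integral structure, i.e.\ that conjugation by $H$ acts on $\bbH_n^\circ$, so that $\delta_h\cdot\cD^\an(\bbH_n^\circ,E)^{(m)}=\cD^\an(\bbH_n^\circ,E)^{(m)}\cdot\delta_h$. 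Then every ascending chain of left ideals of $D(\bbH_n^\circ,H)^{(m)}$ is an ascending chain of $\cD^\an(\bbH_n^\circ,E)^{(m)}$-submodules of a Noetherian module, so it stabilizes; and $D(\bbH_n^\circ,H)^{(m)}$ is Banach as a finite direct sum of copies of $\cD^\an(\bbH_n^\circ,E)^{(m)}=\widehat{A^{(m)}}[1/p]$, which is Banach since $\widehat{A^{(m)}}$ is $p$-adically complete and $p$-torsion free.

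To prove $\cD^\an(\bbH_n^\circ,E)^{(m)}=\widehat{A^{(m)}}[1/p]$ Noetherian it is enough, localization preserving Noetherianity, to prove $\widehat{A^{(m)}}$ Noetherian; and since $\widehat{A^{(m)}}$ is complete for a ring filtration it is enough to equip it with a filtration whose associated graded ring $\gr\widehat{A^{(m)}}$ is Noetherian, for instance Noetherian because commutative and finitely generated over a Noetherian base. The crucial input is that $L_H$ is powerful, $[L_H,L_H]\subseteq p^\eps L_H$: after the rescaling $\partial_j\mapsto\vpi^n\partial_j$ (the one that makes these algebras depend on $n$), the commutators of the chosen topological generators of $\widehat{A^{(m)}}$ have strictly larger filtration index than their products, so $\gr\widehat{A^{(m)}}$ is commutative; and a direct computation with the divided-power coefficients $q^{(m)}_{i_1}!\cdots q^{(m)}_{i_d}!/(i_1!\cdots i_d!)$ identifies $\gr\widehat{A^{(m)}}$ with a polynomial ring in $d$ variables over $\gr\cO_E$ (so over a PID; over the residue field $k_E$ after inverting $\vpi$), Noetherian by Hilbert's basis theorem. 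Verifying that the filtration is multiplicative and that the graded ring carries no relations beyond commutativity is the technical heart of the matter and the step I expect to cost the most effort --- this is exactly where the precise shape of the congruence algebras $A^{(m)}$ enters, and it can alternatively be quoted wholesale from \cite[5.2.6]{Em17} (and, for the crossed product, \cite[5.3.11]{Em17}).

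For (ii) I would argue by duality. The wide open $\bbH_n^\circ$ is an increasing union of affinoid subdomains $\bbH_n^{\circ,(m)}$ each relatively compact in the next, so $\cO(\bbH_n^\circ)\cong\varprojlim_m\cO(\bbH_n^{\circ,(m)})$ is a nuclear Fr\'echet space and its strong dual $\cD^\an(\bbH_n^\circ,E)\cong\varinjlim_m\cD^\an(\bbH_n^\circ,E)^{(m)}$ is of compact type, the transition maps being completely continuous --- the standard fact that restriction along a relatively compact inclusion of affinoids, equivalently the inclusion of completed enveloping algebras for strictly increasing radii, is compact. Since $C^\la(H,E)_{\bbH_n^\circ-\an}=\bigoplus_{h\in H/H_{n+1}}\cO(\bbH_n^\circ)$ is a finite direct sum, passing to strong duals (cf.\ \cite[Prop. 16.10]{NFA}) gives $D(\bbH_n^\circ,H)\cong\varinjlim_m D(\bbH_n^\circ,H)^{(m)}$ as topological $E$-vector spaces, again of compact type. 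Finally the maps in the system are continuous $E$-algebra homomorphisms, since the convolution product on $D(\bbH_n^\circ,H)$ restricts compatibly to each $D(\bbH_n^\circ,H)^{(m)}$ (as already reflected in the factorizations \ref{inclusions}), so the isomorphism is one of topological $E$-algebras; the only real content beyond bookkeeping is once more the compactness of the transition maps.
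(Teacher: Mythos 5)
Your proposal is correct and follows essentially the same route as the source the paper relies on: the paper itself gives no argument for this theorem beyond the citation of \cite[5.3.11, 5.2.6, and remark before 5.2.6]{Em17} (hence the \qed in the statement), and your sketch --- reducing (i) to Noetherianity of the connected piece via the crossed-product decomposition and the normalization $\delta_h\cdot\cD^\an(\bbH_n^\circ,E)^{(m)}=\cD^\an(\bbH_n^\circ,E)^{(m)}\cdot\delta_h$, proving that via a filtration with commutative polynomial associated graded (using $[L_H,L_H]\subseteq p^\eps L_H$), and obtaining (ii) by dualizing the nuclear Fr\'echet presentation of $\cO(\bbH_n^\circ)$ with compact transition maps --- is a faithful outline of exactly those results of Emerton. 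The two technical points you correctly flag as the real content (the graded-ring computation for $A^{(m)}$ and the compactness of the transition maps) are precisely what the cited propositions supply.
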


\vskip8pt

\begin{theorem}\label{structure2} (i) For an $F$-uniform pro-$p$ group $H$ the distribution algebra $D^\la(H, E)$ is a weak Fr\'echet-Stein algebra with a weak Fr\'echet-Stein structure 

$$D^\la(H, E)\car \varprojlim_n D(\bbH_n^{\circ}, H)\,.$$

\vskip5pt

(ii) The inclusions $D(\bbH_n^\circ,H)^{(m)} \hra D^\an(\bbH_n^\circ, H)$ are flat for all $n$ and $m$.

\vskip5pt

(iii) The inclusions in (ii) induce an isomorphism

$$D^\la(H, E) \car \varprojlim_n D(\bbH_n^{\circ}, H)^{(m_n)} \,,$$

\vskip8pt

which equip $D^\la(H, E)$ with a Fr\'echet-Stein structure. 
\end{theorem}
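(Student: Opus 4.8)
The plan is to obtain all three statements from Emerton's work \cite{Em17} (building on Schneider-Teitelbaum \cite{S-T2}), combined with \ref{structure1} and the factorization \ref{inclusions}; the only point peculiar to the $F$-uniform situation is that the rigid analytic groups $\bbH_n$, $\bbH_n^\circ$ of \ref{rigid}--\ref{wideopen} are good analytic subgroups of $H$ in Emerton's sense --- already observed in \ref{rigid} --- so that for arbitrary $F$ his constructions apply just as for $F=\Qp$. For \emph{(i)} I would first record that, since $H$ is compact and the $\bbH_n^\circ$ form a cofinal system of analytic neighbourhoods of $1$, local $F$-analyticity yields a topological identification $C^\la(H,E) \car \varinjlim_n C^\la(H,E)_{\bbH_n^\circ-\an}$ presenting $C^\la(H,E)$ as a space of compact type (the model case $H=\Zp$ is the Example above); dualizing via \cite[Prop. 16.10 (iii)]{NFA} then gives a topological $E$-algebra isomorphism $D^\la(H,E) \car \varprojlim_n D(\bbH_n^\circ,H)$. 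It remains to verify the three axioms of \S\ref{weakFS}: each $D(\bbH_n^\circ,H)$ is of compact type by \ref{structure1}(ii), hence hereditarily complete; the transition map $D(\bbH_{n+1}^\circ,H) \to D(\bbH_n^\circ,H)$ is BH because by \ref{inclusions} it factors through the Banach algebra $D(\bbH_n^\circ,H)^{(m_n)}$ (\ref{structure1}(i)); and $D^\la(H,E) \to D(\bbH_n^\circ,H)$ has dense image, being the transpose of the closed embedding $C^\la(H,E)_{\bbH_n^\circ-\an} \hra C^\la(H,E)$ (Hahn-Banach applied to the reflexive pre-dual; alternatively this is \cite[\S5.3]{Em17}).

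\smallskip

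For \emph{(ii)}: by \ref{structure1}(ii) and \ref{inclusions} the target $D^\an(\bbH_n^\circ,H)$ is the compact-type colimit $\varinjlim_{m'} D(\bbH_n^\circ,H)^{(m')}$ and the map in question is its $m$-th structure morphism; via the description $D(\bbH_n^\circ,H)^{(m')} = E[H]\ot_{E[H_{n+1}]}\cD^\an(\bbH_n^\circ,E)^{(m')}$ recalled above, it is obtained by the finite free base change $E[H]\ot_{E[H_{n+1}]}(-)$ from the inclusion $\cD^\an(\bbH_n^\circ,E)^{(m)} \hra \cD^\an(\bbH_n^\circ,E) = \varinjlim_{m'}\cD^\an(\bbH_n^\circ,E)^{(m')}$. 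Since flatness of ring maps ascends along such a base change and is stable under filtered colimits of flat maps, everything reduces to the flatness of each transition map $\cD^\an(\bbH_n^\circ,E)^{(m')} \to \cD^\an(\bbH_n^\circ,E)^{(m'+1)}$, equivalently of the inclusion $\widehat{A^{(m')}} \hra \widehat{A^{(m'+1)}}$ of $p$-adically completed enveloping algebras after inverting $p$. That flatness is the Schneider-Teitelbaum-type statement for these Noetherian Banach distribution algebras: one filters by order of differential operator, observes that the associated graded rings are commutative Noetherian polynomial rings and that the inclusion is compatible with the filtrations, and deduces flatness from the graded situation (cf. \cite{S-T2} and \cite[\S5.2]{Em17}).

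\smallskip

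For \emph{(iii)}: the factorization \ref{inclusions} interleaves the tower $\{D(\bbH_n^\circ,H)\}_n$ of part (i) with the tower $\{D(\bbH_n^\circ,H)^{(m_n)}\}_n$, so passing to these mutually cofinal subsystems produces a topological $E$-algebra isomorphism $D^\la(H,E) \car \varprojlim_n D(\bbH_n^\circ,H)^{(m_n)}$. This is again a weak Fr\'echet-Stein structure --- the $D(\bbH_n^\circ,H)^{(m_n)}$ are Banach, hence hereditarily complete, transition maps between Banach spaces are BH, and $D^\la(H,E) \to D(\bbH_n^\circ,H)^{(m_n)}$ has dense image because the image of the dense subalgebra of $D^\la(H,E)$ generated by $E[H]$ and $U(\Lie(H))$ is already dense there. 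Each $D(\bbH_n^\circ,H)^{(m_n)}$ is left Noetherian by \ref{structure1}(i), and the only remaining requirement --- right flatness of the transition maps $D(\bbH_{n+1}^\circ,H)^{(m_{n+1})} \to D(\bbH_n^\circ,H)^{(m_n)}$ --- follows, after enlarging the $m_n$ if necessary so that the relevant factorizations of \ref{inclusions} are available, from part (ii) together with the Schneider-Teitelbaum flatness theorem at the Banach level. The hard part of the whole argument is exactly this last flatness: the integers $m_n$ in \ref{inclusions} must be chosen simultaneously so that $D(\bbH_{n+1}^\circ,H)$ maps into $D(\bbH_n^\circ,H)^{(m_n)}$ and so that the induced Banach-algebra transition maps are flat, and carrying this out is the content of \cite[5.3.19]{Em17}.
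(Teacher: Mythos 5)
Your proposal is correct and follows essentially the same route as the paper: the paper's proof of this theorem consists precisely of the citations \cite[5.3.1]{Em17} for (i), the remark after equation (5.3.20) in the proof of \cite[5.3.19]{Em17} for (ii), and \cite[5.3.19]{Em17} for (iii), and your sketch is an accurate unpacking of those results that defers to Emerton at the same critical junctures (in particular the simultaneous choice of the $m_n$ and the right flatness of the Banach-level transition maps in (iii)). Nothing in your outline deviates from the logical structure of the paper's argument, which is purely citational.
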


\begin{proof} (i) \cite[5.3.1]{Em17}.

\vskip8pt

(ii) This is stated after equation (5.3.20) in the proof of \cite[5.3.19]{Em17}.

\vskip8pt

(iii) \cite[5.3.19]{Em17}. 
\end{proof}

\vskip8pt

As a consequence of the previous theorem we obtain the following flatness results.

\begin{theorem}\label{flatness}
(i) For every $n \ge 0$ the homomorphism of $E$-algebras 

$$\pr^{\DlHN_\bullet}_n:\DlHN \lra \DHnn^{(m_n)} \;,$$ 
\vskip8pt

obtained from the canonical projection $\DlHN \ra D(\bbH_{n+1}^\circ,H)$, followed by the inclusion $D(\bbH_{n+1}^\circ,H) \hra D(\bbH_n^\circ,H)^{(m_n)}$ in \ref{inclusions}, is flat.

\vskip5pt

(ii)  The natural map $$\DlHN\ra \DHnn$$ is flat for every $n\geq 1$.
\end{theorem}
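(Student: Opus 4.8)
The plan is to deduce both statements from the structural results \ref{structure1} and \ref{structure2}, using the standard fact that a filtered colimit of flat modules is flat, together with the transitivity of flatness along composites of ring maps.

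For part (i): by \ref{structure2}(iii) the sequence $\{D(\bbH_n^\circ,H)^{(m_n)}\}_n$ is a Fr\'echet-Stein structure on $\DlHN$, which by definition (see \ref{FS-algs}) means precisely that each transition map $D(\bbH_{n+1}^\circ,H)^{(m_{n+1})} \to D(\bbH_n^\circ,H)^{(m_n)}$ is right flat. First I would recall that in a Fr\'echet-Stein algebra $A \car \varprojlim_n B_n$ the canonical projection $A \to B_n$ is always flat: indeed $A \to B_m$ factors through $A \to B_N \to B_m$ for all $N \ge m$ with the latter map flat, and one passes to the limit, or one invokes directly that $B_n$ is flat over $A$ (this is part of the Schneider--Teitelbaum theory of Fr\'echet-Stein algebras, and it is also immediate from Theorem \ref{tensorbijection} applied to short exact sequences of coadmissible modules, since the category of coadmissible modules is abelian and $B_n \ot_A (-)$ is exact on it). The map $\pr_n^{\DlHN_\bullet}: \DlHN \to D(\bbH_n^\circ,H)^{(m_n)}$ in the statement is by construction the composite of $\DlHN \to D(\bbH_{n+1}^\circ,H)$ with the inclusion $D(\bbH_{n+1}^\circ,H) \hra D(\bbH_n^\circ,H)^{(m_n)}$ from \ref{inclusions}; but this composite is exactly the canonical projection $\DlHN \to D(\bbH_n^\circ,H)^{(m_n)}$ attached to the Fr\'echet-Stein structure of \ref{structure2}(iii), hence flat. (Strictly one should check that the map labelled $\pr_n^{\DlHN_\bullet}$ genuinely agrees with the structural projection; this is a matter of unwinding the factorization \ref{inclusions}, which is where the index $m_n$ was chosen.)

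For part (ii): I would factor the natural map $\DlHN \to D(\bbH_n^\circ,H)$ through $D(\bbH_n^\circ,H)^{(m_n)}$. By \ref{inclusions} the natural map $D(\bbH_{n+1}^\circ,H) \to D(\bbH_n^\circ,H)$ factors as $D(\bbH_{n+1}^\circ,H) \to D(\bbH_n^\circ,H)^{(m_n)} \to D^\an(\bbH_n^\circ,H)$, and $D^\an(\bbH_n^\circ,H)$ is a direct summand construction over $D(\bbH_n^\circ,H)^{(m_n)}$ compatible with the decompositions recalled before \ref{structure1}; in particular the inclusion $D(\bbH_n^\circ,H)^{(m_n)} \hra D^\an(\bbH_n^\circ,H)$ is flat by \ref{structure2}(ii). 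Now $D(\bbH_n^\circ,H) = \varinjlim_m D(\bbH_n^\circ,H)^{(m)}$ by \ref{structure1}(ii), and for $m \ge m_n$ each $D(\bbH_n^\circ,H)^{(m)}$ sits between $D(\bbH_n^\circ,H)^{(m_n)}$ and $D^\an(\bbH_n^\circ,H)$, so each is flat over $D(\bbH_n^\circ,H)^{(m_n)}$ as well. Hence $D(\bbH_n^\circ,H) = \varinjlim_m D(\bbH_n^\circ,H)^{(m)}$ is a filtered colimit of flat $D(\bbH_n^\circ,H)^{(m_n)}$-modules, therefore flat over $D(\bbH_n^\circ,H)^{(m_n)}$. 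Composing with the flatness of $\DlHN \to D(\bbH_n^\circ,H)^{(m_n)}$ from part (i) and using transitivity of flatness yields that $\DlHN \to D(\bbH_n^\circ,H)$ is flat.

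The main obstacle I anticipate is the bookkeeping in part (ii): one must verify that the intermediate algebras $D(\bbH_n^\circ,H)^{(m)}$ for $m \ge m_n$ are indeed flat over $D(\bbH_n^\circ,H)^{(m_n)}$ — the cleanest route is to observe that each factors through the flat inclusion into $D^\an(\bbH_n^\circ,H)$, but one needs that the map $D(\bbH_n^\circ,H)^{(m_n)} \to D(\bbH_n^\circ,H)^{(m)}$ followed by $D(\bbH_n^\circ,H)^{(m)} \hra D^\an(\bbH_n^\circ,H)$ equals the flat inclusion $D(\bbH_n^\circ,H)^{(m_n)} \hra D^\an(\bbH_n^\circ,H)$, and that flatness of a composite $R \to S \to T$ with $R \to T$ flat and $S \to T$ faithfully flat (or at least injective with good properties) forces $R \to S$ flat — here it is simpler to note that $D^\an$ is itself a colimit and argue entirely with colimits, or to cite that all these maps are among the structure maps of the compact-type presentation and Emerton has already arranged the requisite flatness in \cite[sec. 5.3]{Em17}. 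Everything else is a formal consequence of "filtered colimits of flat modules are flat" and transitivity.
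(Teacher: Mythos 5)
Your part (i) is exactly the paper's argument: identify $\pr^{\DlHN_\bullet}_n$ with the structural projection of the Fr\'echet--Stein structure $\DlHN \car \varprojlim_n \DHnn^{(m_n)}$ from \ref{structure2}(iii) and invoke the Schneider--Teitelbaum flatness of $A \to B_n$ (\cite[Rem.~3.2]{S-T2}); your extra justification via \ref{tensorbijection} is fine but not needed.

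For part (ii) you have made the right first move (factor $\DlHN \to \DHnn$ through $\DHnn^{(m_n)}$ and compose flat maps), but you then take a detour that the paper does not need and that, as literally written, contains a false step. The paper's proof of (ii) is a one-liner: the second leg $\DHnn^{(m_n)} \hra \DHnn$ is flat \emph{by the statement of} \ref{structure2}(ii) --- note that the target written $D^\an(\bbH_n^\circ,H)$ in \ref{inclusions} and \ref{structure2}(ii) is the algebra $D(\bbH_n^\circ,H)$ itself (the endpoint of the factorization of the transition map $D(\bbH_{n+1}^\circ,H)\to D(\bbH_n^\circ,H)$), not some auxiliary larger algebra. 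So there is nothing left to prove once (i) is in hand. Your attempt to re-derive this flatness by sandwiching is where the gap lies: from $R \to S \to T$ with $R \to T$ flat you cannot conclude that $R \to S$ is flat, so the claim that each $D(\bbH_n^\circ,H)^{(m)}$ with $m \ge m_n$ ``sits between'' $D(\bbH_n^\circ,H)^{(m_n)}$ and the big algebra and is therefore flat over $D(\bbH_n^\circ,H)^{(m_n)}$ does not follow. You flag this yourself, and your proposed repair is the correct one: the input one actually needs is Emerton's \cite[Prop.~5.3.11(ii)]{Em17}, which gives flatness of the transition maps $\cD^\an(\bbH_n^\circ,E)^{(m_1)} \to \cD^\an(\bbH_n^\circ,E)^{(m_2)}$ for $m_1 \le m_2$; combined with $\DHnn = \varinjlim_{m}\DHnn^{(m)}$ and the fact that a filtered colimit of flat modules is flat, this yields \ref{structure2}(ii). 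In other words, your colimit argument is really a proof of the cited input \ref{structure2}(ii) rather than an alternative route to the theorem, and it only closes once you replace the sandwiching step by the flatness of the intermediate transition maps.
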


\begin{proof}
(i) Because $D^\la(H, E) \car \varprojlim_n D(\bbH_n^{\circ}, H)^{(m_n)}$ gives $D^\la(H, E)$ a Fr\'echet-Stein structure, cf. \ref{structure2} (iii), the map $\pr^{\DlHN_\bullet}_n$ is flat by \cite[Rem. 3.2]{S-T2}.

\vskip8pt

(ii) This follows from part (i) and \ref{structure2} (ii). 
\end{proof}

\section{Application to rigid analytic vectors}

\begin{para}\label{rigvectors} {\it Admissible representations and rigid analytic vectors.} Let $G$ be a locally $F$-analytic group, and let $V$ be an admissible locally analytic representation of $G$ on an $E$-vector space $V$, as introduced in \cite{S-T2}. This means that the dual space $M := V'_b$, equipped with the strong topology, is a coadmissible module over the locally analytic distribution algebra $\DlHN$ for any compact open subgroup $H \sub G$. In the following we will assume that $H$ is an $F$-uniform pro-$p$ group.\footnote{Any such $G$ has a fundamental system of open neighborhoods of the identity element which are $F$-uniform pro-$p$ groups.} Whereas the definition of ''coadmissible'' in \cite{S-T2} is based on the key result that $\DlHN$ is a Fr\'echet-Stein algebra, we are here rather interested in the weak Fr\'echet-Stein structure given by $D^\la(H, E)\car \varprojlim_n D(\bbH_n^{\circ}, H)$, cf. \ref{structure1}. In this case, a $\{D(\bbH_n^{\circ}, H)\}_n$-sequence for $M$, cf. \ref{independence}, is given by

$$M_n = \left(V_{\bbH_n^\circ-\an}\right)'_b$$

\vskip8pt

as follows from \cite[6.1.19]{Em17} and its proof. We note here, that even though there is a natural continuous injection from $V_{\bbH_n^\circ-\an} \ra V$, the space $V_{\bbH_n^\circ-\an}$ is topologized as a closed subspace of $C^\an(\bbH, V)$, where $C^\an(\bbH, V)$ is the space of $V$-valued $F$-rigid analytic functions on $\bbH$, cf. \cite[3.3.1, 3.3.13]{Em17}. Usually this intrinsic topology on  $V_{\bbH_n^\circ-\an}$  does not conincide with the subspace topology induced on it as a subspace of $V$.

Therefore, by \ref{lemco}, the canonical map $D(\bbH_n^{\circ}, H) \hot_{D^\la(H, E)} M \ra M_n$ is an isomorphism of $D(\bbH_n^{\circ}, H)$-modules. The next result shows that the same is true for the ordinary tensor product instead of the completed tensor product. 
\end{para}

\begin{theorem}\label{coad} Let $M = V'_b$ be a coadmissible $D^\la(H, E)$-module, and let $M_n = \left(V_{\bbH_n^\circ-\an}\right)'_b$ be as above. Then the canonical map 

$$\DHnn\otimes_{D^\la(H, E)} M \lra M_n \;,$$

\vskip8pt

is an isomorphism of topological $\DHnn$ modules for all $n \ge 1$.
\end{theorem}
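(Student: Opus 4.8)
The idea is to reduce the statement to Theorem~\ref{ThWFS}. Indeed, the distribution algebra $A = D^\la(H,E)$ carries two compatible structures: the weak Fr\'echet-Stein structure $A \car \varprojlim_n D(\bbH_n^\circ, H)$ of Theorem~\ref{structure1}, whose terms $A_n := D(\bbH_n^\circ, H)$ are $E$-algebras of compact type by Theorem~\ref{structure1}~(ii); and the Fr\'echet-Stein structure $A \car \varprojlim_n D(\bbH_n^\circ, H)^{(m_n)}$ of Theorem~\ref{structure2}~(iii), whose terms $B_n := D(\bbH_n^\circ, H)^{(m_n)}$ are left Noetherian Banach $E$-algebras. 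The factorization \ref{inclusions} gives, for each $n$, a commuting diagram of continuous $E$-algebra homomorphisms $D(\bbH_{n+1}^\circ, H) \to D(\bbH_n^\circ, H)^{(m_n)} \to D(\bbH_n^\circ, H)$; after reindexing (replacing the tower $\{B_n\}$ by $\{D(\bbH_{n+1}^\circ,H)^{(m_{n+1})}\}$, or equivalently matching the $n$-th term of $\{A_n\}$ with the $(n{-}1)$-st factorization), one obtains a morphism of projective systems $(i_n)_n: \{B_n\}_n \to \{A_n\}_n$ whose limit is the identity on $A$. This is exactly the input data required by Theorem~\ref{ThWFS}.

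**The steps.**
First I would make the indexing in \ref{inclusions} precise, so that $\{B_n\}_n$ and $\{A_n\}_n$ genuinely form a morphism of projective systems with limit an isomorphism of topological $E$-algebras and with $\{B_n\}_n$ a Fr\'echet-Stein structure on $A$ (here one cites Theorem~\ref{structure2}~(iii) for the Noetherian and flatness conditions, and Remark~\ref{implications} for the density of the maps $A \to B_n$). Second, I would invoke \ref{coadmissible}: since $M = V'_b$ is coadmissible for the weak Fr\'echet-Stein structure $\{A_n\}_n$, with $\{A_n\}_n$-sequence $M_n = (V_{\bbH_n^\circ-\an})'_b$ (this is the content of \cite[6.1.19]{Em17}, recalled in \ref{rigvectors}), each $M_n$ is a finitely generated $A_n$-module; by Lemma~\ref{cptype}~(i) it carries a canonical compact type topology, which one should check agrees with the strong dual topology on $(V_{\bbH_n^\circ-\an})'_b$. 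Third, Theorem~\ref{ThWFS} applies verbatim and yields that the canonical map $\phi_n: A_n \ot_A M \to M_n$, i.e. $D(\bbH_n^\circ,H) \ot_{D^\la(H,E)} M \to M_n$, is bijective. Finally, one upgrades this algebraic bijection to a topological isomorphism: by \ref{rigvectors} (via \ref{lemco}) the completed map $A_n \hot_A M \to M_n$ is already a topological isomorphism, and $\phi_n$ is its restriction to the dense submodule $A_n \ot_A M$; since $\phi_n$ is surjective, $A_n \ot_A M$ is already complete, so $A_n \ot_A M = A_n \hot_A M$ and $\phi_n$ itself is the topological isomorphism. (Alternatively one cites Lemma~\ref{cptype}~(iii): any $A_n$-linear map between finitely generated $A_n$-modules is automatically continuous and strict for the canonical topology.)

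**The main obstacle.**
The conceptual content is all in Theorem~\ref{ThWFS}, which is already proved; the work that remains here is bookkeeping, and the one genuinely delicate point is the compatibility of topologies. One must be sure that the abstract ``canonical compact type topology'' on the finitely generated $A_n$-module $M_n$ supplied by Lemma~\ref{cptype}~(i) coincides with the strong dual topology that $(V_{\bbH_n^\circ-\an})'_b$ carries a priori --- otherwise the final sentence (``isomorphism of topological $\DHnn$-modules'') is not justified. This should follow because $V_{\bbH_n^\circ-\an}$, being a closed subspace of $C^\an(\bbH,V)$ as noted in \ref{rigvectors}, has a space of compact type as its intrinsic topology, so its strong dual is a nuclear Fr\'echet-type module over the compact type algebra $A_n$, and uniqueness of the canonical topology in Lemma~\ref{cptype}~(i) forces the two to agree. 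Granting that, everything else is a direct application of the machinery assembled above, and the proof is short.
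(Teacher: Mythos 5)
Your proposal is correct and follows exactly the route the paper takes: the paper's entire proof consists of setting $A=\DlHN$, $A_n=\DHnn$, $B_n=\DHnn^{(m_n)}$ and invoking Theorem \ref{ThWFS} via Theorems \ref{structure1} and \ref{structure2}. The extra care you take with the reindexing of the tower $\{B_n\}_n$ and with matching the canonical compact-type topology on $M_n$ to the strong dual topology on $(V_{\bbH_n^\circ-\an})'_b$ is a legitimate tightening of details the paper leaves implicit, not a different argument.
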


\begin{proof}
Set $A= \DlHN$, $A_n = \DHnn$, and $B_n = \DHnn^{(m_n)}$, with $m_n$ as in \ref{inclusions}. Then, because of \ref{structure1} and \ref{structure2} we can apply \ref{ThWFS}, and deduce the assertion above.
\end{proof}

\begin{theorem}\label{Exact}
Let $H$ be an $F$-uniform pro-$p$ group, and let $\bbH_n^\circ$ be defined as in \ref{wideopen}. The functor from the category of locally analytic admissible representations of $H$ over $E$ to the category of $E$-vector spaces given by 

$$V \rightsquigarrow V_{\bbH_n^{\circ}-\an}$$

is exact for every $n \ge 1$.
\end{theorem}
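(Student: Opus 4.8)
The plan is to factor the functor $V \rightsquigarrow V_{\bbH_n^\circ-\an}$ as a composite of three exact functors, the substance being supplied by Theorems \ref{coad} and \ref{flatness}. For an admissible locally analytic representation $V$ of $H$ over $E$ write $M := V'_b$ for its strong dual, a coadmissible $\DlHN$-module. I would consider the composite
\[
V \;\longmapsto\; M \;\longmapsto\; \DHnn \ot_{\DlHN} M \;\longmapsto\; \bigl(\DHnn \ot_{\DlHN} M\bigr)'_b ,
\]
and show that its last term is canonically $V_{\bbH_n^\circ-\an}$, so that exactness of the three arrows gives the theorem.

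The first arrow $V \mapsto V'_b$ is the Schneider--Teitelbaum anti-equivalence between the category of admissible locally analytic $H$-representations and the category of coadmissible $\DlHN$-modules recalled in \ref{rigvectors}; being an anti-equivalence of abelian categories it is exact, sending a short exact sequence $0 \to V_1 \to V_2 \to V_3 \to 0$ to a short exact sequence of coadmissible modules. The second arrow is the ordinary tensor product $\DHnn \ot_{\DlHN}(-)$, which is exact because $\DlHN \to \DHnn$ is flat by Theorem \ref{flatness}(ii).

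For the third arrow I would use that, by \ref{rigvectors} (which rests on \cite[6.1.19]{Em17}), the modules $M_n := (V_{\bbH_n^\circ-\an})'_b$ form a $\{\DHnn\}_n$-sequence for $M$; in particular $M_n$ is a finitely generated $\DHnn$-module, and Theorem \ref{coad} gives a natural isomorphism $\DHnn \ot_{\DlHN} M \car M_n$. Since $\DHnn$ is of compact type (Theorem \ref{structure1}(ii)), Lemma \ref{cptype} endows $M_n$ with its canonical compact type topology, for which all $\DHnn$-linear maps are strict and finitely generated submodules are closed; hence the image of the second arrow is a strict short exact sequence of spaces of compact type. Dualizing is exact on such sequences, the strong dual establishing an anti-equivalence between spaces of compact type and their (reflexive Fr\'echet) duals, cf. \cite{NFA}; since $V_{\bbH_n^\circ-\an}$ is reflexive, $(M_n)'_b \cong V_{\bbH_n^\circ-\an}$ naturally in $V$. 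This identifies the composite above with $V \rightsquigarrow V_{\bbH_n^\circ-\an}$, which is therefore exact; forgetting topologies yields the statement for a functor into $E$-vector spaces.

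The one step demanding genuine care is the last one: one must check that the strong dual topology on $(V_{\bbH_n^\circ-\an})'_b$ agrees with the canonical compact type topology on $M_n$ as a finitely generated $\DHnn$-module, so that Lemma \ref{cptype}(ii),(iii) genuinely applies and dualization back is exact and involutive; this is where the input from \cite{Em17} on the intrinsic topology of $V_{\bbH_n^\circ-\an}$ is needed. Everything substantive --- that $(V_{\bbH_n^\circ-\an})'_b$ is the \emph{uncompleted} tensor product $\DHnn \ot_{\DlHN} V'$, and that $\DlHN \to \DHnn$ is flat --- is already contained in Theorems \ref{coad} and \ref{flatness}, so no further serious work is required.
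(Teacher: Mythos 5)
Your proposal is correct and follows essentially the same route as the paper: the same factorization into the Schneider--Teitelbaum anti-equivalence $V \mapsto V'_b$, the tensor functor $M \mapsto \DHnn \ot_{\DlHN} M$ (exact by the flatness of Theorem \ref{flatness}(ii) together with the identification of Theorem \ref{coad}), and dualization of finitely generated $\DHnn$-modules, whose exactness rests on strictness and closedness of images in the canonical compact type topology via Lemma \ref{cptype}. The only cosmetic difference is that the paper spells out the exactness of the dualization steps directly (open mapping theorem, extension of continuous functionals along strict injections, and \cite[1.2]{S-T}), while you invoke the compact-type/nuclear-Fr\'echet duality anti-equivalence; both amount to the same input.
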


\begin{proof} Denote by $\cA_H$ ($\cC_{D^\la(H, E)}$, $\Mod^{\rm f.g.}_{\DHnn}$, $\Mod_E$, resp.) the category of admissible locally analytic $H$-representations on $E$-vector spaces (coadmssible $D^\la(H, E)$-modules, finitely generated $\DHnn$-modules, $E$-vector spaces, resp.). The functor in question is a composition of three functors 

\begin{numequation}\label{functors}
\cA_H \;\; \xrightarrow{V \rightsquigarrow V'_b} \;\; \cC_{D^\la(H, E)}  \;\; \xrightarrow{M \rightsquigarrow M_n} \;\; \Mod^{\rm f.g.}_{\DHnn}  \;\; \xrightarrow{X \rightsquigarrow X'} \;\; \Mod_E \;,
\end{numequation}

where $M_n = \DHnn\otimes_{D^\la(H, E)} M$. Objects in the category $\Mod^{\rm f.g.}_{\DHnn}$ carry a canonical $\DHnn$-module topology with respect to which they are $E$-vector spaces of compact type, by \ref{structure1} (ii) and \cite[5.1.1]{PSS}. 

\vskip8pt

By \cite[6.3]{S-T2} the first functor is an anti-equivalence of categories. A quasi-inverse functor is given by $M \rightsquigarrow M'_b$. We show that this functor is exact. Let  

$$0 \lra M_1 \xrightarrow{\iota} M_2 \xrightarrow{\pi} M_3 \lra 0$$

\vskip8pt

be an exact sequence of coadmissible modules. Consider the sequence of dual spaces

\begin{numequation}\label{exact1}
0 \lra M_3' \xrightarrow{\pi'} M_2' \xrightarrow{\iota'} M_1' \lra 0 \;.
\end{numequation}

$\pi'$ is clearly injective, and $\iota' \circ \pi'= 0$. If $\iota'(\lambda) = 0$, then $\lambda$ descends to a continuous linear form $\overline{\lambda}$ on $M_2/M_1$, equipped with the quotient topology. By the open mapping theorem, the continuous bijection $M_2/M_1 \ra M_3$ is a homeomorphism, and $\overline{\lambda}$ is a continuous linear form $\nu$ on $M_3$, and $\pi'(\nu) = \lambda$. Since any morphsim in $\cC_{D^\la(H, E)}$ is strict (proof of \cite[3.6]{S-T2}), so is $M_1 \xrightarrow{\iota} M_2$, and any continuous linear form on $M_1$ extends therefore to a continuous linear form on $M_2$ (cf. \cite[9.4]{NFA}). This shows that \ref{exact1} is exact.

\vskip8pt

Since $M \rightsquigarrow M'_b$ is an anti-equivalence of categories, any continuous $H$-morphism between admissible locally analytic representations is strict, and the maps in \ref{exact1} are thus strict too, i.e., \ref{exact1} is an exact sequence of topological vector spaces. 

\vskip8pt

The functor in the middle of \ref{functors} is exact by \ref{flatness} (ii). Now consider an exact sequence   

$$0 \lra M_1 \xrightarrow{\iota} M_2 \xrightarrow{\pi} M_3 \lra 0$$

\vskip8pt

of $\DHnn$-modules. By \ref{cptype}, the maps in this exact sequence are strict with closed image. It then follows from \cite[1.2]{S-T} that the sequence of dual spaces

$$0 \lra M_3' \xrightarrow{\pi'} M_2' \xrightarrow{\iota'} M_1' \lra 0 \;.$$

\vskip8pt

is exact too.
\end{proof}

\section{Example: a weak Fr\'echet-Stein structure on \texorpdfstring{$D^\la(\Zp, E)$}{}}
Let us consider the locally analytic functions $\ClaZp$. This is a locally convex $E$-vector space with a compact-type  topology and is naturally a $\Qp$-analytic representation over an $E$-vector space of $\Zp$. The dual $D^\la(\Zp,E)$ naturally inherits the structure of a Fr\'echet space. Let $X$ be the rigid analytic open unit disc centered at $0$ over $E$, then we can show for any $z\in X(\bbC_p)$ and $a\in \Zp$ the function $\kappa_z(a):= (1+z)^a$ defines a locally $\bbQ_p$-analytic character of $\Zp$. Given $\lambda\in D^\la(\Zp,E)$ we obtain a function $F_\lambda(z):= \lambda(\kappa_z)$ on $X(\bbC_p)$. $F_{\lambda}(z)$ is a rigid analytic function on $X$ and can thus be expanded as a power series $\sum_{n=0}^{\infty}a_nz^n$ where $a_n\in E$ and $\lim_{n\ra \infty}|a_n|r^n=0$ for all $r<1$. If we denote the $E$-valued rigid analytic functions on $X$ by $\cO(X):=\Big\{\sum_{n=0}^{\infty} a_n T^n \,\Big| a_n\in E \;\text{s.t}\; \lim_{n\ra \infty} a_nr^n\ra 0\; \forall\; r < 1 \Big\}$ then it is known \cite[Thm. 1.3, Section. 2.3.4]{AM} that 
\begin{align*}
    D^\la(\Zp,E)&\ra \cO(X)\\
    \lambda&\ra F_\lambda
    \end{align*}
    is an isomorphism. 
    
\vskip8pt    
Let us consider, $C^\la(\Zp)_{p^n\cm-\an}$, this is the space of locally analytic functions such that around any point on $\Zp$ the function is rigid analytic on an open unit disk of radius $|p|^{n}$, i.e., 

$$\ClaZp_{p^n\cm-\an}= \bigoplus_{a\in \Z/p^{n+1}\Z}\cO(a+p^n\cm)$$ 

\vskip8pt

where $\cO(a+p^n\cm):=\Big\{\sum_n c_n(T-a)^n \,\Big|\; c_n\in E, \lim_{n\ra \infty}c_nr^n=0\;\forall\;r< |p|^n \Big\}$. By compactness of $\Zp$ we see that 
$$C^\la(\Zp, E)= \varinjlim_{n\geq 0}\Big(\bigoplus_{a\in \Z/p^{n+1}\Z}\cO(a+p^n\cm)\Big)\;,$$

\vskip8pt

and dually (cf. \cite[16.10 (iii)]{NFA})

$$D^\la(\Zp, E) =\varprojlim_{n\geq 0}\Big(\bigoplus_{a\in \Z/p^{n+1}\Z}\cO(a+p^n\cm)\Big)'_b \;.$$ Our main objective is to analyze, $\bigoplus_{a\in \Z/p^{n+1}\Z}\cO\Big(a+p^n\cm\Big)$.

\begin{lemma}\label{estimate}
Let $b\in\bbZ$ be a representative of an element of $\Z/p^{n+1}\Z$. Let $a=b+p^{n+1}c$ with $|c|=1$ and $\frac{b-i}{p^{n+1}}\neq -c\, (\mbox{\,\rm mod\;} p)\;,0\leq i <p^{n+1}$, then

$$\max_{b \in \Z/p^{n+1} \Z}  \Big|{a\choose k}\Big|= \Big|\lfloor\frac{k}{p^{n+1}}\rfloor!\Big|^{-1}\;.$$
\end{lemma}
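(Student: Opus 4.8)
The plan is to reduce the estimate to Legendre's formula via $\binom{a}{k}=\tfrac{1}{k!}\prod_{i=0}^{k-1}(a-i)$, which gives
\[
v_p\!\left(\binom{a}{k}\right)=\sum_{i=0}^{k-1}v_p(a-i)-v_p(k!),
\]
where $v_p$ is the $p$-adic valuation normalised by $v_p(p)=1$. The first step is the elementary observation that the hypotheses on $a$ force $v_p(a-i)=\min\{v_p(b-i),\,n+1\}$ for every $i$. Indeed $a-i=(b-i)+p^{n+1}c$, so if $v_p(b-i)<n+1$ the two summands have different valuations and $v_p(a-i)=v_p(b-i)$; if $v_p(b-i)>n+1$ then $v_p(a-i)=v_p(p^{n+1}c)=n+1$; and if $v_p(b-i)=n+1$ then $a-i=p^{n+1}\big(\tfrac{b-i}{p^{n+1}}+c\big)$, and the hypothesis $\tfrac{b-i}{p^{n+1}}\not\equiv-c\pmod p$ says exactly that the bracketed term is a unit, whence again $v_p(a-i)=n+1$. (To pin down $v_p\binom{a}{k}$ this must hold for all $i$ with $v_p(b-i)=n+1$, a condition avoiding finitely many residues of $c$ modulo $p$ and hence always satisfiable; note then that $v_p\binom{a}{k}$ depends only on $b\bmod p^{n+1}$, so the maximum over $b\in\bbZ/p^{n+1}\bbZ$ is well defined.)

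Next I would repackage the sum: writing $\min\{v,n+1\}=\sum_{j=1}^{n+1}\mathbf 1_{\{\,p^{j}\mid\,\cdot\,\}}$ and interchanging the order of summation,
\[
\sum_{i=0}^{k-1}v_p(a-i)=\sum_{j=1}^{n+1}N_j(b),\qquad N_j(b):=\#\{\,0\le i<k:\ i\equiv b\ (\mathrm{mod}\ p^{j})\,\}.
\]
A one-line count shows $N_j(b)\in\{\lfloor k/p^{j}\rfloor,\ \lfloor k/p^{j}\rfloor+1\}$, with the smaller value occurring exactly when $(b\bmod p^{j})\ge(k\bmod p^{j})$. In particular $b\equiv k\pmod{p^{n+1}}$ realises $N_j(b)=\lfloor k/p^{j}\rfloor$ simultaneously for all $1\le j\le n+1$, and for that $b$ one may still choose $c$ generically; since $N_j(b)\ge\lfloor k/p^{j}\rfloor$ always, this $b$ minimises $v_p\binom{a}{k}$, and for every $b$
\[
v_p\!\left(\binom{a}{k}\right)\ \ge\ \sum_{j=1}^{n+1}\Big\lfloor\tfrac{k}{p^{j}}\Big\rfloor-v_p(k!),
\]
with equality when $b\equiv k\pmod{p^{n+1}}$.

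Finally, Legendre's formula $v_p(k!)=\sum_{j\ge1}\lfloor k/p^{j}\rfloor$ and the iterated-floor identity $\big\lfloor\lfloor k/p^{n+1}\rfloor/p^{\ell}\big\rfloor=\lfloor k/p^{n+1+\ell}\rfloor$ turn the right-hand side into
\[
-\sum_{\ell\ge1}\Big\lfloor\tfrac{k}{p^{n+1+\ell}}\Big\rfloor\ =\ -\,v_p\!\left(\Big\lfloor\tfrac{k}{p^{n+1}}\Big\rfloor!\right),
\]
so $\min_b v_p\binom{a}{k}=-v_p\big(\lfloor k/p^{n+1}\rfloor!\big)$ and therefore $\max_b|\binom{a}{k}|=p^{\,v_p(\lfloor k/p^{n+1}\rfloor!)}=\big|\lfloor k/p^{n+1}\rfloor!\big|^{-1}$, as asserted. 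The only point needing care is the combinatorial middle step — that the single congruence $b\equiv k\pmod{p^{n+1}}$ minimises all of $N_1(b),\dots,N_{n+1}(b)$ at once and is compatible with a generic choice of $c$; everything else is bookkeeping with Legendre's formula.
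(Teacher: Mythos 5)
Your proof is correct and follows essentially the same route as the paper's: the hypothesis on $c$ reduces $|a-i|$ to $|b-i|$ truncated at level $n+1$, the resulting valuation sum is rewritten as $\sum_{j=1}^{n+1}N_j(b)$ (the paper's $\alpha_{b,j}$, obtained there by telescoping rather than by swapping sums), the extremal residue $b\equiv k\pmod{p^{n+1}}$ minimizes all counts simultaneously, and Legendre's formula finishes the computation. Your treatment of the case $v_p(b-i)=n+1$, where the condition $\tfrac{b-i}{p^{n+1}}\not\equiv -c \pmod p$ is actually used, is somewhat more explicit than the paper's, but the argument is the same.
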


\begin{proof} We have,
\begin{numequation}\label{absvalue}
|a-i|= \begin{cases} |b-i| \hskip50pt i\neq  b \, (\mbox{\,\rm mod\;}p^{n+1}\Zp)\\
|p^{n+1}|\hskip50pt i= b \, (\mbox{\,\rm mod\;}p^{n+1}\Zp)
\end{cases}
\end{numequation}

Let $\alpha_{b,j}:=\#\{0\leq i < k\,|\; |b-i|\leq |p|^j\}\, ,$
\vskip8pt
note with our choices of $a$ and $b$, 
\begin{numequation}\label{alpha}
\alpha_{b, n+1}= \{0\leq i <k \,|\; |b-i|= |p|^{n+1}\; .\} 
\end{numequation}

Using (\ref{absvalue}) and (\ref{alpha}) we get that for a fixed $b$,
\begin{align*}
    &|a(a-1)\ldots(a-k+1)|= \prod_{\substack{i= b(\, (\mbox{\,\rm mod\;} p^{n+1})\\0\leq i <k}}|p|^{n+1}\prod_{\substack{i\neq b(\, (\mbox{\,\rm mod\;} p^{n+1})\\0\leq i<k}}|a-i| \\
    &= |p|^{(n+1)\alpha_{b,n+1}}|p|^{n(\alpha_{b,n}-\alpha_{b,n+1})}|p|^{(n-1)(\alpha_{b,n-1}-\alpha_{b,n})}\ldots|p|^{(\alpha_{b,1}-\alpha_{b,2})}\\
    &=|p|^{\sum_{j=0}^{n+1}\alpha_{b,j}}
\end{align*}
\vskip8pt
We note that $\alpha_{b,j}=\lfloor\frac{k}{p^j}\rfloor$ or $\lfloor\frac{k}{p^j}\rfloor+1$ and if we write $k= m.p^{n+1}+ b_0$, then $\alpha_{b_0, j}= \lfloor\frac{k}{p^j}\rfloor \;\forall\; 1\leq j\leq n+1$.

\vskip8pt

So we obtain, 

$$\max_{b\in \Z/p^{n+1}\Z} |a(a-1)\ldots(a-k+1)|= |p|^{\sum_{j=1}^{n+1} \lfloor\frac{k}{p^j}\rfloor}= \Big|\frac{k!}{\lfloor \frac{k}{p^{n+1}}\rfloor!}\Big|$$

\vskip8pt
Or equivalently, 
$$\max_{b\in \Z/p^{n+1}\Z} \Big|{a\choose k}\Big|= \Big|\Big\lfloor\frac{k}{p^{n+1}}\Big\rfloor!\Big|^{-1}$$
\end{proof}

\begin{theorem} Let us define $r_n:= |p|^{\frac{1}{(p-1)p^n}}$ and $$\cF_n:= \Big\{\sum_{k=0}^{\infty} c_k {x \choose k}\;\Big|\lim_{k\ra \infty}c_kr_n^k\epsilon^k\ra 0 \;\mbox{for all}\, \epsilon < 1 \Big\}$$ Given $f\in  \bigoplus_{a\in \Z/p^{n+1}\Z}\cO(a+p^n\cm)\,,$ the map $ev_f:\Zp\ra E$ defined by restricting $f$ to $\Zp$ is locally analytic and its Mahler series lies in $\cF_n$. Conversely, every element of  $\cF_n$ is obtained as the Mahler series of some $g\in  \bigoplus_{a\in \Z/p^{n+1}\Z}\cO(a+p^n\cm)$. The resulting map $\bigoplus_{a\in \Z/p^{n+1}\Z}\cO(a+p^n\cm)\ra \cF_n$ is a topological isomorphism.
\end{theorem}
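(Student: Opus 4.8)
The plan is to produce an explicit two‑sided inverse and then invoke the open mapping theorem. Both $\bigoplus_{a\in\Z/p^{n+1}\Z}\cO(a+p^n\cm)$ and $\cF_n$ are Fréchet spaces — the former as a finite direct sum of the spaces $\cO(a+p^n\cm)=\varprojlim_{r<|p|^n}\cO(D(a,r))$, the latter with the natural projective‑limit topology attached to the stated growth condition on the Mahler coefficients — so it is enough to exhibit a continuous linear bijection $\bigoplus_a\cO(a+p^n\cm)\to\cF_n$; the non‑archimedean open mapping theorem then upgrades it to a topological isomorphism.

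First I would dispose of the soft part. For $a\in\Z/p^{n+1}\Z$ the residue ball $a+p^{n+1}\Zp$ lies inside the wide open disk $a+p^n\cm$ (because $|p|^{n+1}<|p|^n$), so for $f=(f_a)_a$ the restriction $ev_f$ agrees on each $a+p^{n+1}\Zp$ with the convergent power series $f_a$; hence $ev_f$ is locally analytic, and $f\mapsto ev_f$ is injective since $a+p^{n+1}\Zp$ accumulates in $a+p^n\cm$ (identity principle on the disk). The conceptual reason behind the shape of $\cF_n$ is the binomial identity $(1+z)^x=\sum_{k\ge0}\binom{x}{k}z^k$, which under the Amice transform $\lambda\mapsto\lambda((1+z)^{(\cdot)})$ matches the $k$‑th Mahler coefficient functional with the monomial $z^k$; thus $c_k(ev_f)$ is a continuous linear functional of $f$ whose size one must control. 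Here Lemma~\ref{estimate} is the crucial input: together with Legendre's formula $v_p(k!)=\sum_{j\ge1}\lfloor k/p^j\rfloor$ it evaluates, up to elementary powers of $r/|p|^n$, the quantity $\max_a\sup_{T\in D(a,r)}|\binom{T}{k}|$ for $r<|p|^n$, the governing factor being $|\lfloor k/p^{n+1}\rfloor!|^{-1}$. Inserting these estimates shows simultaneously that $ev_f\in\cF_n$ and that $f\mapsto(c_k(ev_f))_k$ is continuous into $\cF_n$.

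For the inverse, start from $(c_k)_k$ with $\sum_k c_k\binom{x}{k}\in\cF_n$; Mahler's theorem produces a continuous $g$ on $\Zp$ with these coefficients. Fixing a residue class $a$ and a radius $r<|p|^n$, the same bounds from Lemma~\ref{estimate} on $\sup_{T\in D(a,r)}|\binom{T}{k}|$ combine with the defining decay of $\cF_n$ to force $|c_k|\,\sup_{T\in D(a,r)}|\binom{T}{k}|\to0$, so $\sum_k c_k\binom{T}{k}$ converges in the Tate algebra $\cO(D(a,r))$; letting $r\uparrow|p|^n$ and collecting the residue classes mod $p^{n+1}$ yields $f=(f_a)_a\in\bigoplus_a\cO(a+p^n\cm)$ with $ev_f=g$, and the same estimates give continuity of $(c_k)_k\mapsto f$. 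By construction this inverts the forward map. The main obstacle throughout is the precise arithmetic bookkeeping in these two estimates — matching ``analytic on the fattened disk $a+p^n\cm$'' with the exact decay rate built into $\cF_n$, and ruling out cancellation between the different residue classes mod $p^{n+1}$ so that the termwise bounds genuinely control the direct‑sum norms — and this is exactly what Lemma~\ref{estimate} isolates, by pinning down for each $k$ the extremal residue class realizing $|\lfloor k/p^{n+1}\rfloor!|^{-1}$; granting it, the remainder is routine functional analysis together with the open mapping theorem.
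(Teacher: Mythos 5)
Your proposal follows essentially the same route as the paper: the entire content is the estimate of $\sup_{x\in a+p^n\cm}\bigl|\binom{x}{k}\bigr|$ via Lemma~\ref{estimate} (the factor $\bigl|\lfloor k/p^{n+1}\rfloor!\bigr|^{-1}$ corrected by powers of $|y/p|$), which is exactly the computation the paper carries out by writing $x=a+p^ny$, and both arguments are equally terse on the converse direction. The only addition is your explicit appeal to the open mapping theorem for the topological isomorphism, which the paper leaves implicit; that is a harmless and correct piece of scaffolding, since both sides are Fr\'echet spaces.
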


\begin{proof}
The crux of the proof lies in estimating $p$-adic norms of ${x \choose k}$ for $x\in a+p^n\cm $. For such an $x$ we write $x= a+ p^ny$, maximum-modulus principle allowing us to choose $|p|<|y|<1$.

\vskip8pt

If we choose representatives of $\Zp/p^{n+1}\Zp$ in $\Zp$ as $a= b+ p^{n+1} c$ with $|c|= 1$ 
then we have,
\vskip8pt

\begin{numequation}\label{absvalue2}
|x-i|= \begin{cases} |a-i| \hskip50pt i\notin a+p^{n+1}{\cO_{\bbC_p}}\\
|p^ny|\hskip54pt i\in a+p^{n+1}\cO_{\bbC_p}p
\end{cases}
\end{numequation}
 Using (\ref{absvalue2}) we get,
    \begin{align*}
    &|x(x-1)\ldots(x-k+1)| = |(a+p^ny)(a-1+p^ny)\ldots(a-k+1+p^ny)|\\
    &= \prod_{\substack{a-i\in p^{n+1}\cO_{\bbC_p}\\ 0\leq i < k}} |p^ny| \prod_{\substack{a-i\notin p^{n+1}\cO_{\bbC_p}\\ 0\leq i < k}} |a-i| = \prod_{0\leq i < k} |a-i|\prod_{\substack{\nu_p(a-i)= n+1\\ 0\leq i < k}}|\frac{y}{p}|\\
    &=\Big|{a \choose k}k!\Big|\prod_{\substack{\nu_p(a-i)= n+1\\ 0\leq i < k}}|\frac{y}{p}|
\end{align*}
By Thm.\ref{estimate}, 
$$\max\Big\{\Big|{a \choose k}\Big|_{\bbB_{\leq p^{n+1}}(b)}\,\Big|\; b\in \Zp/p^{n+1}\Zp\Big\} = \Big|\lfloor\frac{k}{p^{n+1}}\rfloor!\Big|^{-1}$$

\vskip8pt where $\bbB_{\leq p^{n+1}}(b)$ is the closed disk of radius $p^{n+1}$ around $b$. And $\#\{\nu_p(a-i)= n+1\}\leq \lceil \frac{k-b}{p^{n+1}}-1\rceil$ with the bound being sharp. Combining all of these together we get for $x\in a+ p^n\cm$
\begin{align*}
    \Big|{x \choose k}\Big|&=|\lfloor\frac{k}{p^{n+1}}\rfloor!|^{-1}|\frac{y}{p}|^{\lceil \frac{k-b}{p^{n+1}}-1\rceil} \text{\;with\, equality\, for\, certain $a$ and $b$},\\&\leq |p|^{-\frac{k}{(p-1)p^{n+1}}-\frac{k}{p^{n+1}}}|y|^{\frac{k}{p^{n+1}}}= r_n^{-k}(|y|^{\frac{1}{p^{n+1}}})^{k}\text{\;(asymptotically)}\\&\leq r_n^{-k}\epsilon^k\;\; \forall \;\epsilon< 1.   
\end{align*}
 The second line follows from the facts $|n!|\sim |p|^{\frac{n}{p-1}}$ and $\lceil \frac{k-b}{p^{n+1}}-1\rceil\sim \frac{k}{p^{n+1}}$ asymptotically.

Let $f \in \bigoplus_{a\in \Zp/p^{n+1}\Zp}\cO(a+p^n\cm)$ (considered as an element of  $C^\la(\Zp, E)$) be written as $f=\sum_k c_k {x \choose k} $ (Mahler series expansion of $f$). Convergence of this expression for $x\in a+p^n\cm$ gives $\lim_{k\ra \infty} c_kr_n^{-k}\epsilon^k\ra 0$ for every $\epsilon < 1$.

\vskip8pt That there are no $ 0< r < r_n$ such that  $\lim_{k\ra \infty}c_k r^k\epsilon^k\ra 0$ can be deduced from the logarithmic (in $k$) growth of
$\frac{|\lfloor\frac{k}{p^{n+1}}\rfloor!|^{-1}|\frac{y}{p}|^{\lceil \frac{k-b}{p^{n+1}}-1\rceil}}{|p|^{-\frac{k}{(p-1)p^{n+1}}-\frac{k}{p^{n+1}}}|y|^{\frac{k}{p^{n+1}}}}$
\end{proof}
Now we prove the following theorem
\begin{theorem}
Let  a space of sequences $(c_k)_k, c_k\in E$ be defined as 
$$\cF_n:= \Big\{\sum_{k= 0}^{\infty} c_k{x \choose k}\, \Big|\lim_{k\ra \infty} c_kr_n^{-k}\epsilon^k\ra 0\,  \mbox{for all}\, \epsilon< 1\Big\}\,.$$

\vskip8pt

Let us define for a fixed $\epsilon<1$ the normed space 
$$\cF_{n,\epsilon}:=\Big\{\sum_{k=0}^{\infty} c_k {x\choose k}\;\mbox{where\;} \,\Big|\sum_k  c_k {x \choose k}\Big|_\epsilon:= \sup\{|c_k|r_n^{-k}\epsilon^k\}_k \Big\}\,.$$

\vskip8pt

We have $\cF_n= \varprojlim_{\epsilon < 1}\cF_{n,\epsilon}$. Then the continuous dual of this space is  the space of sequence $(d_k)_k$, formally written as $\sum d_kT^k$ can be explicitly described as 
$$\cG_n:=\Big\{\sum_{k=0}^{\infty} d_kT^k\,\Big| \;\exists \;R> r_n\; s.t. \lim_{k\ra \infty} d_kR^k\ra 0 \Big\}\,.$$
\end{theorem}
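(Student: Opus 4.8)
The strategy is to identify the continuous dual of $\cF_n = \varprojlim_{\epsilon<1}\cF_{n,\epsilon}$ by first computing the dual of each Banach piece $\cF_{n,\epsilon}$ and then passing to the colimit, since the dual of a projective limit (of a compact-type presentation) is the locally convex inductive limit of the duals. Concretely, $\cF_{n,\epsilon}$ is the Banach space of formal Mahler series $\sum_k c_k \binom{x}{k}$ with $\sup_k |c_k| r_n^{-k}\epsilon^k < \infty$; as a Banach space this is isometrically a weighted $\ell^\infty$-type (or rather $c_0$-type) space on the index set $\bbN$ with weights $w_k = r_n^{-k}\epsilon^k$. Its continuous dual is then the corresponding weighted $\ell^1$-type space: linear functionals $\mu$ correspond to sequences $(d_k)_k$ via $d_k = \mu(\binom{x}{k})$, with $\mu\big(\sum c_k\binom{x}{k}\big) = \sum c_k d_k$, and continuity with respect to $|\cdot|_\epsilon$ is equivalent to $\sup_k |d_k| w_k^{-1} = \sup_k |d_k| r_n^{k}\epsilon^{-k} < \infty$, i.e. $|d_k| \le C\, r_n^{-k}\epsilon^k \cdot$ — wait, one must be careful with the direction: the dual norm condition is $|d_k| r_n^{k}\epsilon^{-k}$ bounded, i.e. $|d_k R^k|$ bounded for $R = r_n/\epsilon > r_n$ (and in fact $\to 0$ in the $c_0$ setting). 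I would set up this dual pairing carefully, using \cite[Prop. 16.10 etc.]{NFA} or a direct computation for weighted sequence spaces over a spherically complete field, to get $(\cF_{n,\epsilon})'_b \cong \{\sum d_k T^k : \sup_k |d_k| (r_n/\epsilon)^k < \infty\}$ (or the $c_0$-variant; since $E$ is discretely valued and these are the duals of $c_0$-type spaces, the bounded condition is what appears).

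\textbf{Passing to the limit.} Next I would invoke that $\cF_n = \varprojlim_{\epsilon<1}\cF_{n,\epsilon}$ is a reduced projective limit of Banach spaces with the transition maps $\cF_{n,\epsilon'} \to \cF_{n,\epsilon}$ for $\epsilon' > \epsilon$ being injective with dense image (in fact these are compact maps, so $\cF_n$ is of compact type). By \cite[Prop. 16.10 (iii)]{NFA} — exactly the reference used earlier in the paper for the analogous statement about $D^\la(\Zp,E)$ — the strong dual of this projective limit is the locally convex inductive limit $\varinjlim_{\epsilon<1}(\cF_{n,\epsilon})'_b$. Under the identifications above, the transition maps on the dual side are the natural inclusions of the weighted sequence spaces, and the union over $\epsilon < 1$ of the conditions ``$\sup_k |d_k|(r_n/\epsilon)^k < \infty$'' is precisely ``there exists $R > r_n$ with $|d_k| R^k \to 0$'' — note $r_n/\epsilon$ ranges over all reals $> r_n$ as $\epsilon$ ranges over $(0,1)$, and boundedness for some such ratio is equivalent to decay for a slightly smaller one. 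This yields $(\cF_n)'_b \cong \cG_n$ as the statement claims, and the topology on $\cG_n$ is the locally convex inductive limit topology (i.e. the usual topology on overconvergent functions on the closed disk of radius $r_n$).

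\textbf{Main obstacle.} The routine part is the Banach-space duality for weighted sequence spaces; the delicate point I expect to need care with is the bookkeeping between the two ``directions'' of the weights — it is easy to get confused about whether the dual condition produces $R > r_n$ or $R < r_n$, and whether one lands in a bounded-type or a $c_0$-type space. The cleanest way to pin this down is to test the pairing on the basis elements $\binom{x}{k}$ and on the monomial-like generating functionals, and to track which estimate (from the preceding theorem, $|\binom{x}{k}|_\epsilon = r_n^{-k}\epsilon^k$ up to the asymptotic error) controls the dual norm. A second, more structural point to verify is that the transition maps in the projective system $\{\cF_{n,\epsilon}\}_\epsilon$ are indeed such that \cite[16.10 (iii)]{NFA} applies (injectivity and density, which follow since polynomials, i.e. finitely-supported Mahler series, are dense in each $\cF_{n,\epsilon}$); once that is in hand, the colimit computation of the dual is formal and the description of $\cG_n$ drops out.
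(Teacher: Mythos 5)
Your proposal is correct and takes essentially the same route as the paper: the paper's own proof likewise observes that a continuous functional $\phi$ on the projective limit $\cF_n=\varprojlim_{\epsilon<1}\cF_{n,\epsilon}$ is bounded with respect to some $|\cdot|_\epsilon$, sets $d_k=\phi\big(\tbinom{x}{k}\big)$, and converts the resulting bound $|d_k|\leq C\,r_n^{-k}\epsilon^k$ into $\lim_k |d_k|R^k=0$ for $r_n<R<r_n/\epsilon$. If anything your write-up is more complete, since the paper records only this forward inclusion and leaves implicit both the converse (that each element of $\cG_n$ defines a continuous functional) and the topological identification, which you handle via the weighted-sequence-space duality on each Banach piece and the dual-of-projective-limit statement from \cite[Prop. 16.10 (iii)]{NFA}.
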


\begin{proof}
Let $\phi$ be a continuous linear functional from $\cF_n\ra E$. By continuity, there exists $\epsilon < 1$ and $C> 0$ such that for each $f(x):= \sum_{k=0}^{\infty} c_k {x \choose k}$ we have $|\phi(f(x))|\leq C |f(x)|_{\epsilon}$. Now if we write $\phi({x \choose k}) = d_k$, then we get 
\begin{align*}
    &|d_k| \leq C r_n^{-k}\epsilon^k\\
   \implies &|d_k|r_n^k(\frac{1}{\epsilon})^k\leq C\\
   \implies &\lim_{k\ra \infty}|d_k|R^k = 0 \;\forall\; R < \frac{r_n}{\epsilon}
    \end{align*}
\end{proof}
As a consequence of the previous theorem we get a neat description of the dual of $\bigoplus_{a\in \bbZ/p^{n+1}\bbC_p}\cO(a+p^n\cm)$ 
\begin{cor}\label{pnan}
$(\bigoplus_{a\in \bbZ/p^{n+1}\bbC_p}\cO(a+p^n\cm))'_b\cong \cO_{r_n}(X)^{\dagger}$, where $\cO_{r_n}(X)^{\dagger}$ is the space of overconvergent functions on a closed disk of radius $r_n$
\end{cor}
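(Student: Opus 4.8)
The plan is to read the corollary off directly from the two theorems immediately preceding it. The first of those supplies, via the Mahler expansion, a topological isomorphism
$$\bigoplus_{a\in \Z/p^{n+1}\Z}\cO(a+p^n\cm)\ \car\ \cF_n\,,$$
and since dualizing a topological isomorphism of locally convex $E$-vector spaces produces a topological isomorphism of strong duals, we get
$$\Big(\bigoplus_{a\in \Z/p^{n+1}\Z}\cO(a+p^n\cm)\Big)'_b\ \cong\ (\cF_n)'_b\,.$$
The second theorem identifies $(\cF_n)'_b$ with the space $\cG_n$ of formal power series $\sum_k d_k T^k$ for which $d_k R^k\to 0$ for some $R>r_n$. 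Since this is, word for word, the definition of the ring $\cO_{r_n}(X)^{\dagger}$ of overconvergent functions on the closed disk of radius $r_n$ recorded in the Example, chaining the two displays yields the asserted isomorphism $\big(\bigoplus_{a}\cO(a+p^n\cm)\big)'_b\cong\cO_{r_n}(X)^{\dagger}$.

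The only point deserving attention is that the second theorem, as stated, pins down $(\cF_n)'_b$ only as an $E$-vector space, whereas the corollary asserts a topological isomorphism. To upgrade it I would use the presentation $\cF_n=\varprojlim_{\epsilon<1}\cF_{n,\epsilon}$ as a Fr\'echet space, so that $(\cF_n)'_b=\varinjlim_{\epsilon<1}(\cF_{n,\epsilon})'_b$ carries the locally convex inductive limit topology; the dual of the weighted Banach space $\cF_{n,\epsilon}$ with norm $\sup_k|c_k|r_n^{-k}\epsilon^k$ is the Banach space of power series $\sum_k d_kT^k$ with $\sup_k|d_k|(r_n/\epsilon)^k<\infty$, and under the substitution $R=r_n/\epsilon$ the cofinal system $\{\epsilon\to 1^-\}$ corresponds to $\{R\to r_n^+\}$, with the dual Banach norms equivalent on each term to the Gauss norms of radius $R$. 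Hence the inductive limit topology on $(\cF_n)'_b$ agrees with the usual topology on $\cO_{r_n}(X)^{\dagger}$ as the rising union of the affinoid disk algebras of radii $R>r_n$. This bookkeeping is entirely routine, so I do not expect a genuine obstacle: the substance of the corollary is already contained in the two preceding theorems, and all that remains is to combine them.
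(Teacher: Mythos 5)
Your argument is exactly the paper's: the corollary is obtained by chaining the Mahler-series isomorphism $\bigoplus_a\cO(a+p^n\cm)\car\cF_n$ with the identification $(\cF_n)'_b\cong\cG_n=\cO_{r_n}(X)^{\dagger}$ from the two preceding theorems. In fact you go slightly further than the paper, which only asserts that the topological upgrade ``can be shown'': your computation of the duals of the weighted Banach spaces $\cF_{n,\epsilon}$ under the substitution $R=r_n/\epsilon$ is the correct way to carry that out.
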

\begin{proof}
On the level of sets this immediately follows from the above theorem once we explicitly recall the description of 

$$\cO_{r_n}(X)^{\dagger} := \Big\{\sum_{n=0}^\infty a_n T^n \; \Big| \; \lim_{n \ra \infty} a_n R^n\ra 0 \mbox{ for some } R  > r_n \Big\} \;.$$

\vskip8pt 

In fact it can be shown that the isomorphism is indeed an isomorphism of topological vector spaces.
\end{proof}
\bibliographystyle{abbrv}
\bibliography{mybib}

\end{document}